\documentclass[11pt,a4paper,twoside,notitlepage]{article}
\usepackage{amsmath, amsfonts, amsthm, siunitx, amssymb, mathtools, epstopdf, pdfpages}
\usepackage[utf8]{inputenc}
\usepackage[english]{babel}
\usepackage{graphicx}
\usepackage{caption}
\usepackage{subfigure}
\usepackage{float}
\usepackage{titling}
\usepackage[all]{xy}
\usepackage{subfiles, appendix}
\usepackage{listings}
\usepackage{graphicx}
\usepackage[left=3cm,right=3cm,top=4cm,bottom=4cm]{geometry}
\usepackage{listings, textcomp}
\usepackage{url}
\usepackage{graphicx,amsmath,amssymb,amsthm,siunitx,epstopdf,pdfpages,tikz, pgfplots,verbatim,listings,MnSymbol,subfigure,cite,caption,enumerate}
\usetikzlibrary{decorations.pathmorphing}
\usepackage{tikz-cd}
\usetikzlibrary{matrix, calc, arrows}
\usepackage{tabularx}
\usepackage{emptypage}
\usepackage{tikz}
\usetikzlibrary{shapes.misc}

\tikzset{cross/.style={cross out, draw=black, fill=none, minimum size=2*(#1-\pgflinewidth), inner sep=0pt, outer sep=0pt}, cross/.default={2pt}}
\newcolumntype{C}{>{\centering\arraybackslash}X}%
\usepackage{color}
\newtheorem{theorem}{Theorem}
\numberwithin{theorem}{section}
\newtheorem{definition}[theorem]{Definition}
\newtheorem{lemma}[theorem]{Lemma}
\newtheorem{corollary}[theorem]{Corollary}

\theoremstyle{definition}

\newtheorem{remark}{Remark}
\numberwithin{example}{section}
\numberwithin{remark}{section}

\usepackage[hidelinks]{hyperref}

\allowdisplaybreaks

\setcounter{MaxMatrixCols}{20}

\DeclareMathOperator{\G}{\mathcal{G}}
\DeclareMathOperator{\Hg}{\mathcal{H}}

\usepackage{afterpage}

\usepackage{xparse}% http://ctan.org/pkg/xparse
\NewDocumentCommand{\ceil}{s O{} m}{%
  \IfBooleanTF{#1} % starred
    {\left\lceil#3\right\rceil} % \ceil*[..]{..}
    {#2\lceil#3#2\rceil} % \ceil[..]{..}
}
%\[\ceil[\big]{x} \quad \ceil[\Big]{x} \quad \ceil[\bigg]{x} \quad \ceil[\Bigg]{x} \quad \ceil*[\big]{\frac{1}{2}}\]

\title{Combinatorial cost: a coarse setting}
\author{Tom Kaiser}
\date{\today}

\begin{document}

\maketitle

%TODO: Verander kotjes van de referenties (ask Valentin) (DONE)

%TODO: Internet-adres in de citatie!!! (komt over de lijn!!)

%TODO: Remark Goulnara on Roe/Guentner reference  (DONE)

%TODO: More remarks on property A (why interesting, f.e. is sufficient to be embedded in hilbert space)

\begin{abstract}
The main inspiration for this paper is a paper by Elek where he introduces combinatorial cost for graph sequences. We show that having cost equal to $1$ and hyperfiniteness are coarse invariants. We also show `cost$-1$' for box spaces behaves multiplicatively when taking subgroups. We show that graph sequences coming from  Farber sequences of a group have property A if and only if the group is amenable. The same is true for hyperfiniteness. This generalises a theorem by Elek. Furthermore we optimise this result when Farber sequences are replaced by sofic approximations. In doing so we introduce a new concept: property almost-A.
\end{abstract}

The introduction of cost finds its origins in measure equivalence theory, see \cite{gaboriau2016around} for a good reference. In this paper we will mostly study its combinatorial analogue, which was introduced by Elek in \cite{elekcombcost}. We first recall some of the necessary definitions and machinery.

A graph sequence $\mathcal{G}= \{G_n \}_{n\in\mathbb{N}}$ is a sequence of finite connected graphs $G_n$ of uniformly bounded degree $d$, such that $\lim\limits_{n\to\infty} \vert V(G_n)\vert = \infty$. Here $V(G_n)$ denotes the vertex set of $G_n$. Note that $E(G_n)$ denotes the edge set of $G_n$.
The edge number $e(\G)$ of a graph sequence is defined by 
\[ e(\G) = \liminf_{n\to\infty} \frac{\vert E(G_n)\vert}{\vert V(G_n)\vert}. \]
We also define the following equivalence relation of graph sequences: $\G\simeq \Hg$ if $V(G_n) =V(H_n)$ for all $n\in \mathbb{N}$ and the induced `identity map' on the vertices $\operatorname{Id}: \G\rightarrow \Hg $ is an $L$-bi-Lipschitz map for some $L>0$. This allows us to define the cost by the following infimum:
\[ c(\G) = \inf\limits_{\G\simeq\Hg } e(\Hg).\]
One can see that the edge number of a graph is at least one and thus its cost is too.

This paper looks at cost from a coarse viewpoint. An $(A,A)$-quasi-isometry between metric spaces $X$ and $X'$ is a map $f:X\rightarrow X'$ such that for every $x,y\in X$
\[ \frac{d(x,y)}{A} -A \le d(f(x),f(y)) \le A d(x,y) +A \]
and for every $x'\in X'$ there is an $x\in X$ such that $d(x',f(x))\le A$ (i.e. any point in $X'$ is $A$-close to an image point). More generally two such metric spaces are coarsely equivalent if for any two sequences $(x_n)_{n\in\mathbb{N}}$ and $(y_n)_{n\in\mathbb{N}}$ in $X$ we have that 
\[ d(x_n,y_n) \xrightarrow{n\rightarrow\infty} \infty \iff d(f(x_n),f(y_n)) \xrightarrow{n\rightarrow\infty} \infty \]
and there exists some constant $C$ such that the image of $f$ is $C$-dense in $X'$. The following lemma by Khukhro and Valette \cite{khukhro2015expanders} will justify our definition of coarsely equivalent graph sequences:

\begin{lemma}[Khukhro, Valette]\label{AA1}
Let $X = \displaystyle\bigsqcup_{k>0}^{+\infty} X_k$ and $Y = \displaystyle\bigsqcup_{k>0}^{+\infty} Y_k$ be coarse disjoint unions of graphs such that the diameter tends to infinity as $k$ tends to infinity and let $\Phi\colon X\to Y$ be a coarse equivalence between these metric spaces. Then there exists a constant $A$ and an almost permutation $\phi$ between the components of $X$ and the components of $Y$ such that $\Phi\vert_{X_i}$ is an $(A,A)$-quasi-isometry between $X_i$ and $\phi(X_i)$.
\end{lemma}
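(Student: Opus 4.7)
The plan is to exploit three ingredients: bounded geometry of the graphs (from bounded degree), the sequential definition of coarse equivalence, and the coarse disjoint union structure which forces distinct components to be pairwise far apart as the indices grow. First I would upgrade the sequential coarse equivalence to a quantitative form: since every graph involved has degree bounded by some $d$, a standard contradiction argument yields non-decreasing control functions $\rho_-, \rho_+ : \mathbb{R}_{\geq 0} \to \mathbb{R}_{\geq 0}$ with $\rho_\pm(t)\to\infty$, and a density constant $C\geq 0$, such that
\[ \rho_-(d(x,y))\leq d(\Phi(x),\Phi(y))\leq \rho_+(d(x,y)) \]
for all $x,y\in X$ and $\Phi(X)$ is $C$-dense in $Y$. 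Because each $X_k$ is connected, chaining the upper bound along a geodesic path sharpens it to $d(\Phi(x),\Phi(y))\leq K\, d(x,y)+K$ with $K=\rho_+(1)+1$; applying the same upgrade to a coarse inverse $\Phi'$ of $\Phi$ and using $d(\Phi'(\Phi(x)),x)\leq C'$ then produces an affine lower bound $d(\Phi(x),\Phi(y))\geq d(x,y)/K - K$ after possibly enlarging $K$.

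Second, I would construct the almost permutation $\phi$. By the coarse disjoint union hypothesis, all but finitely many pairs of distinct components of $Y$ are at distance strictly greater than $2K$. For adjacent vertices $x,x'$ of $X_i$ the first step gives $d(\Phi(x),\Phi(x'))\leq 2K$, so for all but finitely many $i$ the image of every edge of $X_i$ lies in a single component of $Y$; connectedness of $X_i$ then yields a unique index $\phi(i)$ with $\Phi(X_i)\subseteq Y_{\phi(i)}$. Repeating the argument for $\Phi'$ produces $\psi$ in the opposite direction, and since $\Phi'\circ\Phi$ is at uniformly bounded distance from the identity on $X$, it cannot move any component with sufficiently large diameter outside of itself, forcing $\psi\circ\phi(i)=i$ off a finite set; symmetrically $\phi\circ\psi(j)=j$ off a finite set, which is exactly the almost-permutation property.

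Third, the affine two-sided bounds from the first step restrict, for $i$ large enough, to $d(x,y)/K - K \leq d(\Phi(x),\Phi(y))\leq K\, d(x,y)+K$ for all $x,y\in X_i$. For density inside $Y_{\phi(i)}$, any $y\in Y_{\phi(i)}$ is $C$-close to some $\Phi(x)$ with $x\in X_k$; the inclusion $\Phi(X_k)\subseteq Y_{\phi(k)}$ together with the separation of components forces $\phi(k)=\phi(i)$, and hence $k=i$ once $i$ is large. Setting $A:=\max(K,C)$ gives a single constant witnessing the $(A,A)$-quasi-isometry uniformly in $i$. The main obstacle is producing this uniform $A$: it is precisely bounded degree that converts sublinear coarse control into affine control with a single constant, and the coarse disjoint union separation that rigidifies the component correspondence so that the density argument closes on each $X_i$ separately.
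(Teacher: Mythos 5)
The paper does not actually prove this lemma --- it is imported verbatim from Khukhro and Valette \cite{khukhro2015expanders} --- so there is no in-paper proof to compare against; your proposal is essentially the argument of that reference and is correct: upgrade the sequential definition to control functions, get affine bounds by chaining along edges and composing with a coarse inverse, and extract the almost permutation from the separation of components. Two details deserve to be written out: passing from ``only finitely many pairs of $Y$-components are $2K$-close'' to ``only finitely many bad $X_i$'' uses properness of $\Phi$ (the $\Phi$-preimage of the bounded union of those close components is bounded, hence meets finitely many $X_i$), not just the separation itself; and the affine \emph{lower} bound requires chaining a geodesic for $\Phi'$ inside a single component of $Y$, so it is only available after the component correspondence $\Phi(X_i)\subseteq Y_{\phi(i)}$ has been established, whereas you state it beforehand (also, the affine upgrade comes from connectedness of the graphs rather than from bounded degree). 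Both are routine to repair and do not affect the validity of the argument.
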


Two graph sequences $\G$ and $\Hg$ are coarsely equivalent if there exists a constant $A$ such that for all $n\in\mathbb{N}$ there exists an $(A,A)$-quasi-isometry $\phi_n: G_n\rightarrow H_n$. Note that we ignore the almost permutation part of the lemma since including this does not affect the value of the cost.\newline

In the remainder we show the following results, which are known for `ordinary' cost in the context of measure equivalence theory. We show this starting from the combinatorial setting. Note that the definition of hyperfiniteness will follow later in Section \ref{hyperfiniteness}.

\begin{theorem}
	If two graph sequences $\mathcal{G}$ and $\mathcal{H}$ are coarsely equivalent and one of them has cost $1$, then the other one also has cost $1$.
\end{theorem}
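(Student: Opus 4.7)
Since the cost of any graph sequence is at least $1$, it suffices to show $c(\Hg)\le 1$. The idea is to use the quasi-isometries coming from coarse equivalence to transport a near-optimal witness for $c(\G)=1$ from the vertex sets of $\G$ to those of $\Hg$.

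By the definition of coarsely equivalent graph sequences there exist $(A,A)$-quasi-isometries $\phi_n\colon G_n\to H_n$ with $A$ independent of $n$. Pick a set map $\psi_n\colon V(H_n)\to V(G_n)$ satisfying $d_{H_n}(h,\phi_n\psi_n(h))\le A$ for every $h$; by first imposing $\psi_n(\phi_n(u))=u$ we can take $\psi_n$ surjective. Because $H_n$ has bounded degree, every $A$-ball in $H_n$ has at most $D$ vertices for a constant $D$ depending only on $A$ and the degree bound, so each fiber $B_u:=\psi_n^{-1}(u)$ satisfies $|B_u|\le D$, and in particular $|V(G_n)|\le|V(H_n)|\le D\,|V(G_n)|$.

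Fix $\epsilon>0$ and choose $\mathcal{K}=\{K_n\}$ with $\mathcal{K}\simeq\G$ via an $L$-bi-Lipschitz identity and $e(\mathcal{K})<1+\epsilon$. Define $\mathcal{K}'=\{K_n'\}$ on $V(H_n)$ as the union of a spanning tree in each fiber $B_u$ and, for every edge $\{u,v\}\in E(K_n)$, one edge between chosen representatives of $B_u$ and $B_v$. Then $K_n'$ is connected, has uniformly bounded degree, and
\[
\frac{|E(K_n')|}{|V(H_n)|}=1+\alpha_n(\beta_n-1),\qquad \alpha_n:=\frac{|V(G_n)|}{|V(H_n)|}\in(0,D],\quad \beta_n:=\frac{|E(K_n)|}{|V(G_n)|}.
\]
The central technical step is to verify that the identity $\mathcal{K}'\to\Hg$ is uniformly bi-Lipschitz: every edge of $K_n'$ has $H_n$-length at most $AL+3A$ (spanning-tree edges by the triangle inequality through $\phi_n(u)$; cross-fiber edges using the upper $(A,A)$-bound on $\phi_n$ applied to the $K_n$-edge $\{u,v\}$), while every $H_n$-edge $\{h,h'\}$ yields $d_{G_n}(\psi_n h,\psi_n h')\le 3A^2+A$ by the lower quasi-isometry inequality, which lifts to a bounded $K_n'$-path through the fibers' spanning trees and the cross-fiber edges.

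Passing to a subsequence on which $\beta_n\to e(\mathcal{K})<1+\epsilon$ and using $\alpha_n\le D$, we obtain $e(\mathcal{K}')\le 1+D\epsilon$, whence $c(\Hg)\le 1+D\epsilon$ for every $\epsilon>0$, and finally $c(\Hg)=1$. The main obstacle is the uniform bi-Lipschitz verification, which relies essentially on the bounded degree of both sequences together with the $n$-independent quasi-isometry constants; ensuring $\psi_n$ is surjective, so that no fiber is empty and no edge of $K_n$ is lost, is a minor combinatorial point handled in the setup.
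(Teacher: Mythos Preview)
Your overall strategy matches the paper's: push a near-optimal witness $\mathcal{K}\simeq\G$ forward to a graph $\mathcal{K}'$ on $V(H_n)$ that is a copy of $\mathcal{K}$ with bounded trees attached, verify $\mathcal{K}'\simeq\Hg$, and count edges. The paper organises this slightly differently, first isolating as a separate corollary that attaching or deleting bounded families of terminal vertices preserves ``cost $=1$'' and then reducing to that corollary; your direct computation $|E(K_n')|/|V(H_n)|=1+\alpha_n(\beta_n-1)$ is a more streamlined version of the same idea.

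There is, however, a genuine gap. You assert that imposing $\psi_n(\phi_n(u))=u$ makes $\psi_n$ surjective, but this is only possible when $\phi_n$ is injective, and an $(A,A)$-quasi-isometry need not be: nothing rules out $|V(H_n)|<|V(G_n)|$ (take $G_n$ a path on $2n$ vertices, $H_n$ a path on $n$ vertices, $\phi_n(i)=\lfloor i/2\rfloor$). When surjectivity fails some fibers $B_u$ are empty, and then both pillars of your argument collapse: the edge-count identity $|E(K_n')|=|V(H_n)|-|V(G_n)|+|E(K_n)|$ underlying your formula for $1+\alpha_n(\beta_n-1)$ no longer holds (the right-hand side can even be negative), and the bi-Lipschitz verification breaks because a $K_n$-path realising $d_{K_n}(\psi_n h,\psi_n h')$ may pass through vertices with empty fibers, where no cross-fiber edge is available. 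The paper's proof handles exactly this obstruction by first enlarging $\Hg$ to $\Hg'$ with extra terminal vertices so that the modified map $f'\colon\G\to\Hg'$ becomes injective (after which your construction goes through verbatim on $\Hg'$), and then invoking the terminal-vertex corollary to pass from $\Hg'$ back to $\Hg$. You could also repair your argument in place by rerouting each $u$ with $B_u=\emptyset$ to the nearby nonempty fiber $B_{\psi_n(\phi_n(u))}$, at the cost of slightly worse constants.
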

\begin{theorem}
	If $\G$ and $\Hg$ are coarsely equivalent and $\G$ is hyperfinite, then $\Hg$ is too.
\end{theorem}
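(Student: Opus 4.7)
The plan is to push the hyperfinite partition of $\mathcal{G}$ across the quasi-isometries $\phi_n\colon G_n\to H_n$ and check that only a small set of vertices in $H_n$ needs to be discarded. I will work with the (equivalent in bounded degree) vertex version of hyperfiniteness: for every $\delta>0$ there exists $K=K(\delta)$ and for each $n$ a set $S_n\subseteq V(G_n)$ with $|S_n|\le \delta\vert V(G_n)\vert$ such that every connected component of $G_n\setminus S_n$ has at most $K$ vertices. Note also that the $(A,A)$-quasi-isometries and the common degree bound $d$ force $|V(G_n)|$ and $|V(H_n)|$ to be comparable up to a multiplicative constant depending only on $A$ and $d$.

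For each $n$, fix a ``quasi-inverse'' $\psi_n\colon V(H_n)\to V(G_n)$ sending each $y$ to some $x$ with $d_{H_n}(\phi_n(x),y)\le A$, which exists by the density clause of coarse equivalence. Since the fibres $\psi_n^{-1}(x)$ live inside the ball of radius $A$ around $\phi_n(x)$ in $H_n$, each fibre has size at most some constant $M=M(d,A)$. Denoting the components of $G_n\setminus S_n$ by $C_1,\dots,C_m$, I define blocks $B_i:=\psi_n^{-1}(C_i)\subseteq V(H_n)$; each block has at most $M K$ vertices, and together with $\psi_n^{-1}(S_n)$ they partition $V(H_n)$.

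The key step is to locate, and bound the size of, the set $T_n$ of vertices that have to be deleted from $H_n$ to separate the blocks. If $y,y'\in V(H_n)$ are adjacent and lie in distinct blocks $B_i,B_j$, then the quasi-isometry inequality applied to $\phi_n(\psi_n(y))$ and $\phi_n(\psi_n(y'))$ gives a bound $d_{G_n}(\psi_n(y),\psi_n(y'))\le R$ for some $R=R(A)$. Since $\psi_n(y)$ and $\psi_n(y')$ lie in different components of $G_n\setminus S_n$ yet are at distance $\le R$, any path between them must meet $S_n$, so $\psi_n(y)\in N_R(S_n)$. Hence it suffices to take $T_n:=\psi_n^{-1}(N_R(S_n))$; with it removed, no edge of $H_n$ connects two different blocks, so every component of $H_n\setminus T_n$ lies inside a single $B_i$ and has $\le MK$ vertices. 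Using the ball-size bound in $G_n$ we get $|N_R(S_n)|\le M' |S_n|$ and then $|T_n|\le M M' \delta |V(G_n)|$, which by comparability of the vertex counts is at most $C\delta |V(H_n)|$ for a constant $C$ depending only on $d$ and $A$.

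Given $\epsilon>0$, choosing $\delta=\epsilon/C$ and feeding it to hyperfiniteness of $\mathcal{G}$ produces the required $K':=MK(\delta)$ and sets $T_n$ of size at most $\epsilon\,|V(H_n)|$ for $\mathcal{H}$. The main obstacle, and the place where care is needed, is the \emph{crossing-edge} analysis of the previous paragraph: one must keep track of the interplay between the density constant $A$, the degree bound $d$ and the neighbourhood radius $R$ so that the inflation from $\delta$ to $C\delta$ is a genuine constant (independent of $n$ and of $\epsilon$). Everything else is book-keeping.
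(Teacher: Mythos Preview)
Your argument is correct. The reformulation of hyperfiniteness in terms of small vertex sets is equivalent to the paper's edge version in bounded degree, the quasi-inverse $\psi_n$ has bounded fibres as you claim, and the crossing-edge analysis is sound: adjacent $y,y'$ in different blocks force $d_{H_n}(\phi_n(\psi_n(y)),\phi_n(\psi_n(y')))\le 2A+1$, hence $d_{G_n}(\psi_n(y),\psi_n(y'))\le R:=A(3A+1)$, and any geodesic between them must hit $S_n$. The constants $M,M',C$ depend only on $d$ and $A$, so the inflation from $\delta$ to $C\delta$ is uniform.

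The paper, however, proceeds quite differently. It factors the statement through two auxiliary results. First, a separate lemma shows that hyperfiniteness is preserved under bi-Lipschitz equivalence $\mathcal{G}\simeq\mathcal{G}'$: one simply copies the partition and observes that the number of crossing edges can grow by at most a factor $\bigl(\sum_{i=1}^L d^i\bigr)^2$, since any new crossing edge corresponds to a path of length $\le L$ through an old crossing edge. Second, for the genuinely coarse step, the paper recycles the construction from the proof that cost $1$ is a coarse invariant: it builds $\mathcal{H}'$ by attaching terminal vertices to $\mathcal{H}$ so that the map $f'\colon\mathcal{G}\to\mathcal{H}'$ becomes injective, and then a graph sequence $\mathcal{H}''$ on the same vertex set which is bi-Lipschitz to $\mathcal{H}'$ and is nothing but a copy of $\mathcal{G}$ with finitely many terminal vertices hanging off each vertex. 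Since $\mathcal{G}$ is hyperfinite, $\mathcal{H}''$ is trivially hyperfinite, and the bi-Lipschitz lemma transfers this to $\mathcal{H}'$ and hence to $\mathcal{H}$.

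Your direct push-forward is more self-contained and arguably more transparent as a standalone proof; the paper's route is more modular, reusing machinery already in place and avoiding any explicit neighbourhood-radius bookkeeping.
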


Consider a finitely generated group $\Gamma$ with generating set $S$ and a subgroup $\Lambda\le \Gamma$. One can look at the set of right cosets $\Lambda\backslash \Gamma = \{\Lambda g\;\lvert\; g\in \Gamma \}$. The Schreier graph $Sch(\Gamma,\Lambda,S)$ on these cosets is defined as follows. The vertices are exactly the right cosets and there is a directed edge from $\Lambda g_1$ to $\Lambda g_2$ if there is a generator $s\in S$ such that $\Lambda g_1s= \Lambda g_2$. One can also label the edges by their defining generator. In this paper all Schreier graphs are labelled. Note that this graph is clearly connected. Also note that $Sch(\Gamma, \{1\}, S)$ is the well-known Cayley graph of $\Gamma$ with respect to $S$, written as $Cay(\Gamma,S)$. Constructing the Cayley graph of $\Gamma$ with respect to another generating set $S'$, gives a graph that is quasi-isometric to our original Cayley graph. Since we are working in a coarse setting, we will sometimes omit the generators and write $Cay(\Gamma)$. Moreover one sees that $Sch(\Gamma,\Lambda,S) = Cay(\Gamma/\Lambda,\overline{S})$ if $\Lambda$ is normal in $\Gamma$.\newline

Using this one can construct interesting graph sequences originating from $\Gamma$. For example take a descending sequence $N_1\ge N_2\ge \dots$ of finite index normal subgroups of $\Gamma$ such that $\bigcap\limits_{k\ge 1} N_k = \{1\}$. This is called a filtration of $\Gamma$ and $\Gamma$ is called residually finite if such a filtration exists. The box space with respect to this filtration is defined as $\square_{N_k} \Gamma = \{Cay(\Gamma/N_k, \overline{S} )  \}_{k\in \mathbb{N}}$. The box space does not depend on the choice of generating set in the sense that choosing a different generating set with respect to the same filtration will give a quasi-isometric box space. It is interesting to ask what such a box space can tell us about our group and vice versa. Elek for example showed the following theorem in \cite{elekcombcost}. The properties A and hyperfiniteness will be introduced in Sections  \ref{sectionA} and \ref{hyperfiniteness} respectively. The equivalence between $1$ and $2$ is originally due to Guentner and proven by Roe in \cite{roe2003lectures}.

\begin{theorem}[Elek,\cite{elekcombcost}]\label{elekstheorem}
Consider a residually finite group $\Gamma$ and a filtration $(N_k)_{k\in \mathbb{N}}$. If $\mathcal{G}$ is the associated graph sequence of Cayley graphs then the following are equivalent:
\begin{enumerate}
	\item $\Gamma$ is amenable;
	\item $\mathcal{G}$ has property A;
	\item $\mathcal{G}$ is hyperfinite.
\end{enumerate}
\end{theorem}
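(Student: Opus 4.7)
The plan is to prove the cycle $(1)\Rightarrow(3)\Rightarrow(2)\Rightarrow(1)$. The equivalence $(1)\Leftrightarrow(2)$ is attributed to Guentner--Roe in the paper, so the genuinely new link is with hyperfiniteness. The crucial feature of a filtration $\bigcap_k N_k=\{1\}$, used in both directions connecting $\Gamma$ to $\mathcal{G}$, is that for any fixed radius $R$ the ball $B(1,R)$ in $Cay(\Gamma,S)$ is eventually isometric to any ball of radius $R$ in $Cay(\Gamma/N_k,\overline{S})$ for $k$ large, by vertex-transitivity.

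For $(1)\Rightarrow(3)$, fix $\varepsilon>0$ and choose a finite Følner set $F\subseteq\Gamma$ with $|SF\setminus F|<\varepsilon|F|$, of diameter $R$. For $k$ large enough $F$ embeds isomorphically into $\Gamma/N_k$, and by vertex-transitivity the left translates $\{gF\}_{g\in\Gamma/N_k}$ cover the quotient. A greedy/maximal-disjoint argument extracts an almost-disjoint sub-family covering all but an $\varepsilon$-fraction of the vertices; removing the $S$-boundary edges of each chosen translate, together with the few uncovered vertices, gives a hyperfinite decomposition of $Cay(\Gamma/N_k,\overline{S})$ into pieces of size at most $|F|$.

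For $(3)\Rightarrow(2)$ I would invoke the general fact that hyperfinite bounded-degree graph sequences have property A. Given a decomposition of $G_k$ into components of bounded size after deleting an $\varepsilon$-fraction of edges, set $\xi_v\in\ell^1(V(G_k))$ to be the normalised indicator of the component of $v$. These unit vectors have uniformly bounded support, they agree on intra-component edges, and the $\ell^1$-variation across the few boundary edges is globally controlled, which yields the Reiter-type maps defining property A.

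The main obstacle is $(2)\Rightarrow(1)$, which extracts amenability of $\Gamma$ from property A of the box space. Property A supplies, for every $\varepsilon>0$, a radius $R$ and non-negative unit-norm vectors $\xi_v^{(k)}\in\ell^1(V(G_k))$ supported in $B(v,R)$ with $\|\xi_v^{(k)}-\xi_w^{(k)}\|_1<\varepsilon$ for adjacent $v,w$. Pick $k$ large enough that $B(\overline{1},R+1)\subseteq Cay(\Gamma/N_k)$ is isometric to $B(1,R+1)\subseteq Cay(\Gamma)$; pulling back $\xi_{\overline{1}}^{(k)}$ through this local isomorphism yields a finitely supported non-negative $\eta^{(k)}\in\ell^1(\Gamma)$ of unit norm. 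Simultaneously pulling back $\xi_{\overline{s}}^{(k)}$ for each generator $s\in S$ and using left-invariance of the metric, the inequality transfers to $\|\eta^{(k)}-s\cdot\eta^{(k)}\|_1<\varepsilon$. Thus $(\eta^{(k)})_k$ is a Reiter sequence for $\Gamma$, which is therefore amenable.
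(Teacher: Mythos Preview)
Your cycle runs in the opposite direction from the paper's: the paper (via its Farber generalisation, Theorem~\ref{elekgeneral}) proves $1\Rightarrow 2\Rightarrow 3\Rightarrow 1$, using Lemma~\ref{lemmaamimpA}, Theorem~\ref{Aimplieshyperfinite}, and Theorem~\ref{sofapphypam} respectively. This is not just a stylistic difference; your reversed step $(3)\Rightarrow(2)$ contains a genuine gap.

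The assertion that ``hyperfinite bounded-degree graph sequences have property A'' is false in general, and your sketch does not prove it. Property~A (Definition~\ref{defA}) demands $\lVert\xi_x-\xi_y\rVert_1<\varepsilon$ for \emph{every} edge $(x,y)$, not merely on average. If $\xi_v$ is the normalised indicator of the hyperfinite component containing $v$, then for any boundary edge $(x,y)$ lying between two distinct components one gets $\lVert\xi_x-\xi_y\rVert_1=2$, regardless of how few such edges there are. ``Global control'' on the total $\ell^1$-variation is exactly the wrong quantity here. The paper itself supplies a counterexample in Section~\ref{elekfarbersection}: attach a vanishingly small expander to each graph of a box space of an amenable group. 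The result is still hyperfinite (the expanders are negligible in the edge count) but fails property~A (it contains expanders). So $(3)\Rightarrow(2)$ cannot be established by a general graph-sequence argument; one must route through amenability of $\Gamma$, which is what the paper's cycle does.

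A secondary issue: in your sketch of $(2)\Rightarrow(1)$, pulling back $\xi_{\overline{1}}^{(k)}$ and $\xi_{\overline{s}}^{(k)}$ does not directly yield $\lVert\eta^{(k)}-s\cdot\eta^{(k)}\rVert_1<\varepsilon$, because the property~A maps $\xi$ carry no equivariance. The Guentner--Roe argument you cite actually requires an averaging step over the transitive $\Gamma/N_k$-action to manufacture approximate invariance; ``left-invariance of the metric'' alone does not do this. Since you defer to Guentner--Roe anyway this is less serious, but the sketch as written is incomplete.
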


We will introduce so-called Benjamini-Schramm convergence in the restrictive setting where the limit object is a Cayley graph. One will see that a box space Benjamini-Schramm converges to the Cayley graph of the group from which it originates.  For a nice reference on Benjamini-Schramm convergence one can consult \cite{abert2013benjamini}.\newline

Fix an integer $d$. Let $\mathcal{RG}_d$ be the set of all isomorphism classes of finite rooted graphs of degree $d$ (i.e. a graph with a distinguished vertex that acts as the root), with edges labelled by $\{1,\dots, d\}$. 
Fix a finite graph $G$ and $r>0$. We can define the following probability measure $p_r: \mathcal{RG}_d\rightarrow [0,1]$ for the graph $G$.  For $\alpha\in\mathcal{RG}_d$, we define $V_\alpha(G) = \{ x\in V(G)\; \lvert B_r(G,x) \simeq \alpha \}$ to be vertices of $G$ such that the $r$-ball around such a vertex is isomorphic to $\alpha$. Next we define $p_r(G,\alpha)= \frac{\lvert V_\alpha(G)\rvert}{\lvert V(G)\rvert}$. Hence we look at the probability that a uniformly chosen vertex of $G$ has its $r$-ball isomorphic to $\alpha$. \newline

We say that a labelled graph sequence $\G = \{G_n\}_{n\in\mathbb{N}}$ with labels in $S$ converges in a Benjamini-Schramm sense to the Cayley graph of a group $\Gamma$ with generating set $S$ if for every $r\in\mathbb{N}$, the limit $\lim\limits_{n\rightarrow \infty} p_r(G_n,B_r(Cay(\Gamma),e)) =1$, where $B_r(Cay(\Gamma),e)$ is the $r$-ball in $Cay(\Gamma)$ around the identity element. So in particular this limit exists.\newline

An immediate consequence of this definition is that box spaces converge in a Benjamini-Schramm sense to the Cayley graph of the defining group. However, inspired by this definition, we can do a bit better. Let's look at an arbitrary sequence of finite index subgroups (not necessarily normal, not necessarily with trivial intersection, not necessarily a descending chain). This induces a sequence of Schreier graphs associated to the group $\Gamma$. We say the defining sequence of subgroups is a Farber sequence if the associated graph sequence is Benjamini-Schramm convergent to the Cayley graph of $\Gamma$. See \cite{rankgradcost} for a nice reference. For these `generalisations of box spaces' we can also ask the natural question whether certain properties of the group induce certain properties of the graph sequence and vice versa. In particular we show Elek's Theorem \ref{elekstheorem} in the more general setting of Farber sequences.
\begin{theorem}\label{Farbertheo}
	Consider a finitely generated group $\Gamma$ and a Farber sequence $(N_k)_{k\in \mathbb{N}}$. If $\mathcal{G}$ is the associated graph sequence of Schreier graphs then the following are equivalent:
	\begin{enumerate}
		\item $\Gamma$ is amenable;
		\item $\mathcal{G}$ has property A;
		\item $\mathcal{G}$ is hyperfinite.
	\end{enumerate}
\end{theorem}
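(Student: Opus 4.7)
The plan is to establish the chain $(1)\Rightarrow(3)\Rightarrow(2)\Rightarrow(1)$, using Benjamini--Schramm convergence (which is built into the Farber hypothesis) as the replacement for the covering maps $Cay(\Gamma)\to Cay(\Gamma/N_k)$ used in Elek's box-space argument. Throughout, $\G=\{G_n\}$ denotes the Schreier sequence, and the Farber condition guarantees that for each $r$ the fraction of vertices $v\in G_n$ whose labelled $r$-ball is isomorphic to $B_r(Cay(\Gamma),e)$ tends to $1$.

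For $(1)\Rightarrow(3)$ I would imitate Elek's tiling argument. Given $\varepsilon>0$, pick a Følner set $F\subset\Gamma$ with $|\partial F|/|F|<\varepsilon$ and let $r$ be its diameter in $Cay(\Gamma)$. For $n$ large, a $(1-\varepsilon)$-fraction of vertices $v\in G_n$ admit a labelled isomorphism $B_r(G_n,v)\simeq B_r(Cay(\Gamma),e)$, along which $F$ lifts unambiguously to a set $F_v\subset G_n$ of the same size. A greedy Vitali-type selection then gives an almost tiling of $G_n$ by disjoint copies of $F$, and deleting the boundary edges produces the hyperfinite decomposition.

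For $(3)\Rightarrow(2)$ I would appeal to the standard fact that for bounded-degree graph sequences hyperfiniteness implies property A: slightly fattened normalised indicator functions of the pieces of a hyperfinite decomposition supply the required partitions of unity. The fattening is there to absorb the removed boundary edges so that the $\ell^{2}$-variation does not jump by $\sqrt{2}$ across a cut. This implication does not use the Farber hypothesis and I would cite it rather than reproduce it in full.

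The main obstacle is $(2)\Rightarrow(1)$, where the absence of a covering map $Cay(\Gamma)\to G_n$ really bites. My plan is to transfer property A from $\G$ back to $Cay(\Gamma)$ and then close the loop via the Guentner--Roe equivalence recalled in the introduction. Fix $R,\varepsilon>0$ and let $\xi_n\colon V(G_n)\to\ell^{2}(V(G_n))$ be property A witnesses with support radius $S=S(R,\varepsilon)$. Applying the Farber hypothesis at scale $r=R+S$ produces, in each $G_n$, many vertices $v_n$ with a labelled isometry $\phi_{n,v_n}\colon B_r(G_n,v_n)\to B_r(Cay(\Gamma),e)$. The pushforwards $(\phi_{n,v_n})_{\ast}\xi_n(v_n)$ are unit vectors in $\ell^{2}(B_S(e))\subset\ell^{2}(\Gamma)$, a finite-dimensional space, so a compactness or non-principal ultrafilter argument extracts a limit vector $\xi(e)$; translating by $\Gamma$ then yields a candidate family $\xi\colon \Gamma\to\ell^{2}(\Gamma)$, and the property A inequalities for pairs $(x,y)\in B_R(e)^{2}$ pass to the limit. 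The delicate points are (a) preserving the unit-norm normalisation of the limits and (b) ensuring enough compatibility between the pullbacks $\phi_{n,v_n}$ at different centres so that the limit family is a genuinely $\Gamma$-equivariant partition of unity; both should be handled by a careful diagonal argument as $R\to\infty$, exploiting the homogeneity of the Cayley graph.
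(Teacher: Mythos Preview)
Your chain $(1)\Rightarrow(3)\Rightarrow(2)\Rightarrow(1)$ breaks at the middle step. The claim that hyperfiniteness implies property~A for arbitrary bounded-degree graph sequences is \emph{false}, and the paper itself gives the counterexample (Section~\ref{elekfarbersection}): take a box space of an amenable residually finite group and glue to each $Cay(\Gamma/N_k)$ a small expander graph $H_n$ with $\lvert V(H_n)\rvert/\lvert V(\Gamma/N_k)\rvert<1/n$. The resulting sequence is still hyperfinite (the expanders are asymptotically negligible and can be absorbed into the pieces of a decomposition at arbitrarily small edge cost), yet it contains an expander and hence fails property~A. Your ``fattened indicator functions'' argument cannot work: across a cut edge between two pieces the indicators are orthogonal, and no uniform fattening controls the variation unless you already know something like property~A on the pieces. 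You explicitly say this implication ``does not use the Farber hypothesis'', which is precisely why it cannot hold.

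The paper runs the cycle in the opposite orientation, $1\Rightarrow 2\Rightarrow 3\Rightarrow 1$. For $1\Rightarrow 2$ (Lemma~\ref{lemmaamimpA}) one pushes the F{\o}lner-based functions $\xi_x=\lvert F\rvert^{-1}\mathbf 1_{xF}$ from $\Gamma$ down to each Schreier graph by summing over the fibres of $\Gamma\to\Lambda\backslash\Gamma$; this is elementary and avoids the delicate compactness issues in your proposed pull-back for $(2)\Rightarrow(1)$. For $2\Rightarrow 3$ the paper proves the general implication (Theorem~\ref{Aimplieshyperfinite}) via a Willett-type averaging argument producing F{\o}lner-like subsets $F_n\subset G_n$ and iterating. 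For $3\Rightarrow 1$ (Theorem~\ref{sofapphypam}) one uses Benjamini--Schramm convergence to find, in a hyperfinite partition of $G_N$, a piece all of whose vertices have good $R$-balls, and lifts that piece to a F{\o}lner set in $\Gamma$. Your sketches for $(1)\Rightarrow(3)$ and $(2)\Rightarrow(1)$ are not unreasonable in spirit, but since the cycle you chose cannot be closed you would in any case need one of the paper's implications to finish.
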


One can notice that we can still go a step further and even forget about subgroups alltogether. We look at arbitrary labelled graph sequences that converge to $\Gamma$ in a Benjamini-Schramm sense. If such a graph sequence exists we say the group is sofic. It is not known whether there exists a non-sofic group. This concept was first introduced by Gromov \cite{gromov1999endomorphisms}, but was named by Weiss \cite{weiss2000sofic}. We shall argue in Section \ref{elekfarbersection} that Theorem \ref{Farbertheo} is optimal and cannot be translated to the sofic approximation case. In \cite{alekseev2016sofic} Alekseev and Finn-Sell use a groupoid approach to show the more general implication that the group is amenable if it has a sofic approximation with property A. The reverse implication does not hold. We show the same holds for hyperfiniteness. The reverse implication does also hold. Although not explicitly presented in \cite{elekcombcost}, this result is due to Elek.
\begin{theorem}
Consider a group $\Gamma$ and a sofic approximation $\mathcal{G}$ of $\Gamma$. The following holds
	\begin{enumerate}
		\item If $\G$ has property A, then $\Gamma$ is amenable (Alekseev, Finn-Sell, \cite{alekseev2016sofic}).\\
				The reverse arrow does not hold.
		\item If $\G$ is hyperfinite, then $\Gamma$ is amenable.\\
				The reverse arrow holds (Elek, \cite{elekcombcost}).
	\end{enumerate}
\end{theorem}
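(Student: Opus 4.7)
My plan is to address the four parts of the theorem separately. Part (1a) is the Alekseev--Finn-Sell theorem, so I would cite \cite{alekseev2016sofic} directly.

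The forward implication of (2) I would establish quantitatively. Fix $\epsilon > 0$. Hyperfiniteness produces a constant $K$ and, for each $n$, an edge set $E_n' \subset E(G_n)$ with $|E_n'| \le \epsilon |V(G_n)|$ whose removal leaves connected components of size at most $K$. Choose $r > K+1$. Benjamini--Schramm convergence of $\G$ to $Cay(\Gamma)$ ensures that for $n$ large enough the proportion of vertices $v$ with $B_r(G_n, v) \simeq B_r(Cay(\Gamma), e)$ as labelled graphs is at least $1-\epsilon$. Call a component $F$ of $G_n \setminus E_n'$ \emph{good} if every vertex of $F$ enjoys such a local isomorphism; good components then cover at least $(1-\epsilon)|V(G_n)|$ of the vertices. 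For a good component $F$ and any $v \in F$, the local isomorphism sends $F$ bijectively to a subset $\tilde F \subset Cay(\Gamma)$ of the same size, and because $r > K+1$ every edge of $G_n$ incident to $F$ lies inside $B_r(G_n, v)$, so the isomorphism also preserves the edge boundary. Summing over good components yields $\sum_F |\partial F| \le 2 |E_n'| \le 2\epsilon |V(G_n)|$, and averaging produces a good $F$ with $|\partial F|/|F| \le 2\epsilon/(1-\epsilon)$. Varying $\epsilon$ through a sequence tending to $0$ thus gives a sequence of subsets of $Cay(\Gamma)$ with edge-boundary to size ratio tending to $0$, i.e.\ a F{\o}lner sequence, so $\Gamma$ is amenable.

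For the converse of (2), given a F{\o}lner sequence $(F_n)$ in an amenable $\Gamma$ I would build $G_n$ by taking the labelled subgraph of $Cay(\Gamma)$ induced on $F_n$ and completing the dangling boundary edges by pairing them label-consistently to obtain a finite labelled graph. Vertices deep inside $F_n$ inherit their $r$-balls from $Cay(\Gamma)$, and the F{\o}lner condition forces these to be the vast majority, so $\G$ is sofic; simultaneously, deleting the completion edges (whose number is bounded by the edge boundary of $F_n$, a vanishing fraction of the total) breaks $G_n$ back into the $F_n$-shaped pieces, giving hyperfiniteness. For the failure of the converse of (1), I would exhibit an explicit counterexample with $\Gamma = \mathbb{Z}$: let $Y_n$ be a $3$-regular expander on $\lfloor \log n \rfloor$ vertices and form $G_n$ by joining $Y_n$ to the labelled oriented cycle $C_n$ by a single bridge edge. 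Since $|Y_n|/|V(G_n)| \to 0$ and most cycle vertices lie far from the bridge, $\G$ is Benjamini--Schramm convergent to $Cay(\mathbb{Z})$. Because only one edge connects $Y_n$ to $C_n$, any path in $G_n$ between two vertices of $Y_n$ that leaves $Y_n$ must use the bridge twice, so the $G_n$-metric restricted to $Y_n$ coincides with the expander metric. Thus $\bigsqcup_n Y_n$ is an isometrically embedded coarse subspace of $\bigsqcup_n G_n$, and since expander families fail property A (which passes to coarse subspaces), $\G$ does not have property A either.

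The most delicate step is the forward implication of (2): one has to reconcile the purely combinatorial hyperfinite decomposition with the local statistical information provided by Benjamini--Schramm convergence. The key quantitative lever is the choice $r > K+1$, which confines the edge boundary of every small component to the $r$-ball where the local isomorphism with $Cay(\Gamma)$ is defined, allowing small components to be transported faithfully into $Cay(\Gamma)$ so that the F{\o}lner estimate drops out of a straightforward averaging.
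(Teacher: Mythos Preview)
Your treatment of (1a), (1b), and the forward implication of (2) is essentially the paper's own argument. For (2a) the paper uses the same scheme: hyperfinite decomposition with piece-size bound $R$, Benjamini--Schramm control on balls, pigeonhole to find a ``nice'' piece with small boundary, then lift to a F{\o}lner set. One quantitative slip: you assert that good components cover at least $(1-\epsilon)\lvert V(G_n)\rvert$ vertices, but a single bad vertex contaminates its entire component of size up to $K$, so you only get $(1-K\epsilon)\lvert V(G_n)\rvert$. The paper sidesteps this by asking for $2R$-ball isomorphisms: if \emph{one} vertex of a component has a good $2R$-ball, then \emph{every} vertex has a good $R$-ball (since the component has diameter at most $R$), and the desired coverage follows without the factor $K$. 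Either fix works. Your counterexample for (1b) is the paper's construction specialised to $\Gamma=\mathbb{Z}$; the isometric-embedding remark about the bridge is a clean way to phrase why property A fails.

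The genuine gap is in the converse of (2). The assertion is that \emph{every} sofic approximation of an amenable group is hyperfinite, but your argument only manufactures one particular sofic approximation out of a F{\o}lner sequence. Even for that construction your hyperfiniteness claim fails: deleting the completion edges of $G_n$ returns a single connected piece of size $\lvert F_n\rvert\to\infty$, not a decomposition into pieces of uniformly bounded size $K$. What is actually required is, given an \emph{arbitrary} sofic approximation $\G$ and $\epsilon>0$, to tile each $G_n$ by translates of a fixed F{\o}lner set $F$ pushed through the local isomorphisms $B_r(G_n,x)\simeq B_r(Cay(\Gamma),e)$, and to control the overlaps and the exceptional set. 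This is exactly the content of Elek's argument in \cite{elekcombcost} (stated there for box spaces, but the same proof goes through for sofic approximations since it only uses that almost every $r$-ball is the Cayley $r$-ball); the paper accordingly cites Elek rather than reproving it.
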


Sadly enough this generalisation to the sofic approximation case breaks the symmetry. This motivates us to introduce property almost-A in Definition \ref{almosta}. This `saves' the theorem.
\begin{theorem}
	Consider a group $\Gamma$ and a sofic approximation $\G$ of that group, then the following are equivalent:
	\begin{enumerate}
		\item $\Gamma$ is amenable;
		\item $\mathcal{G}$ has property almost-A;
		\item $\G$ is hyperfinite.
	\end{enumerate}
\end{theorem}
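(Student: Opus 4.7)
The plan is to establish the cyclic chain of implications $(1)\Rightarrow(2)\Rightarrow(3)\Rightarrow(1)$. The implication $(3)\Rightarrow(1)$ is furnished directly by the previous theorem, so the substance of the proof lies in the two remaining arrows, and in ensuring that property almost-A in Definition \ref{almosta} is strong enough to run a property-A-style argument while being weak enough to be implied by amenability together with Benjamini-Schramm convergence alone.

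For $(1)\Rightarrow(2)$, I would exploit that a sofic approximation Benjamini-Schramm converges to the Cayley graph: for each radius $r$, the fraction of vertices $x\in V(G_n)$ with $B_r(G_n,x)\simeq B_r(Cay(\Gamma),e)$ tends to $1$. Amenability of $\Gamma$ yields Følner sets $F_\varepsilon\subset \Gamma$ in $Cay(\Gamma)$, which I pull back along the local isomorphism $B_r(G_n,x)\simeq B_r(Cay(\Gamma),e)$ at every ``good'' vertex $x$ to produce partition-of-unity-style functions satisfying the locally constant / small-boundary conditions characterising property A. The vertices where the transfer fails have vanishing density as $n\to\infty$, which is exactly the error that the qualifier ``almost'' in Definition \ref{almosta} is designed to absorb. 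The bookkeeping consists in matching the $\varepsilon$-tolerance of the Følner set with the local isomorphism radius so that the resulting functions have controlled support and the right slow-variation estimate.

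For $(2)\Rightarrow(3)$, I would adapt the proof that property A implies hyperfiniteness used in the Farber case of Theorem \ref{Farbertheo}. Property almost-A gives the full property-A machinery on a subset $V_n'\subset V(G_n)$ of density tending to $1$; on this large subset one runs the standard construction that removes a small proportion of edges to leave components of uniformly bounded size, and the complement $V(G_n)\setminus V_n'$ has vanishing density, so dropping all edges incident to it contributes only a $o(1)$-fraction of edges to the removed set. Combining the two error terms produces the required hyperfinite decomposition of all of $G_n$.

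The main obstacle is the implication $(1)\Rightarrow(2)$: one must be careful that the bad vertices do not spoil the support of the transferred Følner functions. Concretely, in order to pull back a Følner set $F_\varepsilon$ located at $x$, the entire ball $B_r(G_n,x)$ (with $r\ge \operatorname{diam}(F_\varepsilon)$) must be isomorphic to the corresponding ball in $Cay(\Gamma)$; so the notion of ``good vertex'' must be inflated to a whole neighborhood, and one has to confirm that this inflated good set still has density tending to $1$, which follows from Benjamini-Schramm convergence at radius $r$ but requires a uniformity check as $\varepsilon\to 0$ forces $r\to\infty$.
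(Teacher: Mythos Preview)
Your cyclic scheme $(1)\Rightarrow(2)\Rightarrow(3)\Rightarrow(1)$ is valid, and your treatment of $(1)\Rightarrow(2)$ and $(3)\Rightarrow(1)$ matches the paper's. The paper, however, closes the loop differently: rather than proving $(2)\Rightarrow(3)$ directly, it proves $(2)\Rightarrow(1)$ by observing that if $\G'=\{G_n\setminus V_n'\}$ has property A then $\G'$ is still a sofic approximation of $\Gamma$ (the deleted set has vanishing density), and then invokes the Alekseev--Finn-Sell result that a sofic approximation with property A forces amenability; the equivalence $(1)\Leftrightarrow(3)$ is taken as already established (Theorem~\ref{sofapphypam} and Elek's argument). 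Your route $(2)\Rightarrow(3)$ via Theorem~\ref{Aimplieshyperfinite} applied to $\G'$, followed by cutting the $O(d\lvert V_n'\rvert)$ edges incident to the bad set, is more elementary and avoids the external groupoid machinery; the paper's route is shorter given the citations already in place.

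Two small points. First, in your $(2)\Rightarrow(3)$ paragraph you have the roles of $V_n'$ and its complement reversed relative to Definition~\ref{almosta}: there $V_n'$ is the \emph{small} set and $\G'=\{G_n\setminus V_n'\}$ is what has property A. Second, in $(1)\Rightarrow(2)$ you correctly flag the uniformity issue (the bad set $V_n'$ must be chosen once and for all, independent of $\epsilon$, while the F{\o}lner radius grows as $\epsilon\to 0$); the paper resolves this by a diagonal choice---letting the radius $s$ defining $V_n'$ grow with $n$ via a function $\phi$---and then still needs the redistribution trick from the proof of Theorem~\ref{Aimplieshyperfinite} to push the support of $\xi_x$ from $\ell^1(\G)$ into $\ell^1(\G')$, since points of $xF$ may themselves lie in $V_n'$ even when $x$ does not.
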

We show the following result for general graph sequences. It is a generalisation of a result in \cite{alekseev2016sofic}, where it is proved for sofic approximations.

\begin{theorem}
	A graph sequence $\G$ that has property A is also hyperfinite.
\end{theorem}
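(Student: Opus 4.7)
The plan is to use property A to construct local F\o{}lner-type neighbourhoods at every vertex, and then quasi-tile $V(G_n)$ by these neighbourhoods to obtain the desired hyperfinite decomposition.

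First, I would extract from property A a family of almost-invariant probability measures on each $G_n$. For every $\varepsilon_0>0$ (to be fixed small at the end), property A supplies a constant $S$ and, for each $n$, unit vectors $\xi_x\in \ell^2(V(G_n))$ supported in $B_S(x)$ with $\|\xi_x-\xi_y\|_2<\varepsilon_0$ whenever $xy\in E(G_n)$. Squaring and using $|\xi_x^2-\xi_y^2|=|\xi_x-\xi_y|(\xi_x+\xi_y)$ together with Cauchy--Schwarz gives probability measures $\mu_x=\xi_x^2$ with the same support and $\|\mu_x-\mu_y\|_1\le 2\varepsilon_0$ along each edge.

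Next, I would pass from probability measures to level sets via the layer-cake formula
\[
\|\mu_x-\mu_y\|_1 \;=\; \int_0^\infty \bigl|\{z:\mu_x(z)>t\}\,\triangle\,\{z:\mu_y(z)>t\}\bigr|\,dt, \qquad \int_0^\infty \bigl|\{z:\mu_x(z)>t\}\bigr|\,dt = 1.
\]
Summing the first identity over edges and the second over vertices and averaging, one locates a threshold $t^\ast=t^\ast(n)$ such that the sets $F_x:=\{z:\mu_x(z)>t^\ast\}$ have size at most $d^S$ (from the support condition), contain $x$ after a mild re-centering, and satisfy
\[
\sum_{xy\in E(G_n)}|F_x\triangle F_y| \;\le\; C(d)\,\varepsilon_0\sum_{x\in V(G_n)}|F_x|.
\]
Thus $\{F_x\}_{x\in V(G_n)}$ is a uniform system of sets of bounded diameter with small average edge-boundary, playing the role of local F\o{}lner neighbourhoods.

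Finally, I would extract a partition from $\{F_x\}$ via a Vitali-type greedy selection: process the vertices in some order and retain $F_x$ whenever it is disjoint from all previously retained sets, producing a disjoint subfamily $\mathcal{F}_n$. A charging argument exploiting the edge-invariance of $\{F_x\}$ should bound the uncovered set $V(G_n)\setminus \bigcup \mathcal{F}_n$ by $O(\varepsilon_0)|V(G_n)|$. Removing the edges incident to uncovered vertices together with the inter-cluster edges of $\mathcal{F}_n$ then disconnects $G_n$ into components of size at most $d^S$, with at most $\varepsilon\,|V(G_n)|$ edges deleted provided $\varepsilon_0$ was chosen small enough in terms of $\varepsilon$ and $d$. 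The technical bottleneck I expect is this last quasi-tiling step: converting an overlapping almost-invariant family into a genuine partition requires either an Ornstein--Weiss style quasi-tiling argument or a careful greedy/charging bound, and in both cases the main subtlety is controlling the uncovered mass.
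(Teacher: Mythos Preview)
Your route is genuinely different from the paper's. The paper never attempts to tile $G_n$ by a family of local F\o lner sets constructed all at once. Instead it invokes Willett's lemma (Lemma~\ref{stolenwilpr}) to locate a \emph{single} set $F_n\subset B(x_n,S)$ with $|\partial F_n|<\epsilon|F_n|$, removes it, and then argues carefully that the remaining graph $G_n\setminus F_n$ still carries $\ell^1$-vectors satisfying the property~A estimates, with support radius inflated from $S$ to at most $\max_y|B(y,S)|$. Iterating peels off one block at a time until $G_n$ is exhausted, so no covering or quasi-tiling lemma is ever needed; the technical work is the bookkeeping that corrects $\xi$ after each deletion without spoiling norm, variation, or support. (A small aside: the paper's Definition~\ref{defA} is already in $\ell^1$, so your squaring step from $\ell^2$ to probability measures is unnecessary in this setting.)

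Your approach produces the whole family $\{F_x\}$ in one shot and then faces the packing problem you flag; this is a genuine gap, and neither of your suggested mechanisms closes it as stated. Greedy Vitali selection, armed only with the averaged bound $\sum_{xy}|F_x\triangle F_y|\le C\varepsilon_0\sum_x|F_x|$, does not force the uncovered set to have size $O(\varepsilon_0)|V(G_n)|$: maximality only says each uncovered $F_y$ meets some selected set, and there is no evident way to charge uncovered vertices to selected ones at rate $O(\varepsilon_0)$. Ornstein--Weiss quasi-tiling in its standard form requires F\o lner sets at many nested scales, which a single threshold $t^\ast$ and a single radius $S$ do not provide. One natural salvage---partitioning $V(G_n)$ by the fibres of $x\mapsto F_x$---runs into a circularity: ensuring $F_x\ne\emptyset$ for every $x$ forces $t^\ast<d^{-S}$, and restricting the layer-cake average to that range multiplies the bound on crossing edges by $d^{S}$, with $S=S(\varepsilon_0)$ growing as $\varepsilon_0\to 0$. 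The paper's iterative scheme is designed precisely to sidestep this issue.
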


%A residually finite group $G$ is a group that has a decreasing sequence $ (N_k)_{k>0}$ of finite index normal subgroups with trivial intersection, called a filtration for $G$. To such a group and decreasing sequence one associates a graph sequence $\square_{N_k} G = \{Cay(G/N_k )  \}_{k\in \mathbb{N}}$, which we call a box space of $G$. We show the following theorem, which is known in the setting of measure equivalence theory. 

In Section \ref{costmulttheo} we show that cost of graph sequences coming from Farber sequences behave multiplicatively with respect to taking finite index subgroups.

%\begin{theorem}\label{supercostmult}
%	Take $\Gamma$ a residually finite group and $\Lambda\leq \Gamma$ with $[\Gamma:\Lambda]< \infty$. Consider a filtration $(N_k)_{k\in \mathbb{N}} $ of $\Gamma$ that is contained in $\Lambda$, then
%	\[ [\Gamma:\Lambda] (c(\square_{N_k} \Gamma)-1) =  c(\square_{N_k} \Lambda)-1. \]
%\end{theorem}
\begin{theorem}\label{supercostmult}
	Take $\Gamma$ a residually finite group and $\Lambda\le \Gamma$ with $[\Gamma:\Lambda]< \infty$. Consider a Farber sequence $(N_k)_{k\in\mathbb{N}} $ of $\Gamma$ that is contained in $\Lambda$. Define $\G = \{Sch(\Gamma,N_k,S)\}_{k\in\mathbb{N}}$ and $\Hg= \{Sch(\Lambda,N_k,S')\}_{k\in\mathbb{N}} $ then
	$$ [\Gamma:\Lambda] (c(\G)-1) =  c(\Hg)-1.$$ 
\end{theorem}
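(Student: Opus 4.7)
Write $m=[\Gamma:\Lambda]$. The plan is to prove the two inequalities $c(\Hg)-1\ge m(c(\G)-1)$ and $c(\Hg)-1\le m(c(\G)-1)$ separately, adapting the strategy of Gaboriau's multiplicativity formula for cost in measure equivalence to the combinatorial setting. The basic setup is to fix a Schreier transversal $\{t_1=e,t_2,\ldots,t_m\}$ for the right cosets of $\Lambda$ in $\Gamma$ and take $S'$ to be the resulting Schreier generating set of $\Lambda$. The bijection $(N_k\lambda,i)\mapsto N_k\lambda t_i$ identifies $V(\G_k)$ with $V(\Hg_k)\times\{1,\ldots,m\}$, partitioning $V(\G_k)$ into $|V(\Hg_k)|$ \emph{blocks} of size $m$, each of $\G_k$-diameter at most $2\max_i|t_i|_S$. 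In $\G_k$ itself, each block is spanned by a canonical tree of $m-1$ intra-block edges arising from the Schreier transversal.

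For the inequality $c(\G)\le(c(\Hg)+m-1)/m$: fix a spanning tree $\Theta$ of the finite Schreier graph $Sch(\Gamma,\Lambda,S)$ rooted at $\Lambda$. Given $\Hg'\simeq\Hg$, build $\G'$ on $V(\G_k)$ by placing a copy of $\Hg'$ on the identity fiber $\Lambda/N_k$ and, for each $\Theta$-edge labelled by some $s\in S$ joining cosets $\Lambda t_i$ and $\Lambda t_j$, adjoining the $|V(\Hg_k)|$ Schreier edges of $\G_k$ for $s$ between the two corresponding fibers. This gives $|E(\G')|=|E(\Hg')|+(m-1)|V(\Hg_k)|$ on $m|V(\Hg_k)|$ vertices, so $e(\G')=e(\Hg')/m+(m-1)/m$. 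The bi-Lipschitz equivalence $\G'\simeq\G$ is verified by realising each $\G_k$-edge as a bounded-length $\G'$-path that routes via $\Theta$ to the identity fiber, performs a Reidemeister-rewritten $S'$-word of bounded length in $\Hg'$, and returns via $\Theta$. Taking the infimum over $\Hg'$ yields the inequality.

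The reverse inequality $c(\Hg)\le mc(\G)-(m-1)$ is the main obstacle. Given $\G'\simeq\G$ with $e(\G')$ close to $c(\G)$, I would contract the block partition of $V(\G_k)$ to its roots $N_k\lambda\in V(\Hg_k)$, declaring one $\Hg'$-edge for every $\G'$-edge joining distinct blocks. The edge count then equals $|E(\G')|$ minus the intra-block edge count of $\G'$, and the target bound $|E(\Hg')|\le|E(\G')|-(m-1)|V(\Hg_k)|+o(|V(\G_k)|)$ reduces to showing that $\G'$ carries at least $(m-1)|V(\Hg_k)|$ intra-block edges, up to a vanishing error. Although the canonical intra-block spanning trees live in $\G_k$, an arbitrary bi-Lipschitz modification $\G'$ need not literally contain them, so the Farber condition must be exploited: on a Benjamini--Schramm typical block, the $R$-neighbourhood in $\G_k$ (for $R$ larger than the bi-Lipschitz constant of $\G'\simeq\G$) is isometric to the corresponding $R$-ball in $Cay(\Gamma)$, which forces $\G'$ to contain, or to be modifiable to contain without significant cost, a matching intra-block structure on all but a Benjamini--Schramm-vanishing set of blocks. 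Bi-Lipschitz $\Hg'\simeq\Hg$ then follows from the uniformly bounded block diameter together with Reidemeister rewriting for elements of $\Lambda$.

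The principal difficulty is exactly this intra-block bookkeeping in the reverse direction: upgrading the coarse hypothesis $\G'\simeq\G$ to a near-pointwise conclusion about specific intra-block edges of $\G'$, which is made possible by the local Cayley-graph structure the Farber hypothesis supplies on almost all of $V(\G_k)$. Once this is in place, combining the two inequalities and passing to the $\liminf$ yields the stated equality $[\Gamma:\Lambda](c(\G)-1)=c(\Hg)-1$.
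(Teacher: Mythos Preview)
Your forward inequality $c(\G)\le (c(\Hg)+m-1)/m$ is essentially the paper's construction: the paper also exhibits a bi-Lipschitz model of $\G$ consisting of a single base copy of $\Hg$ with bounded trees of total size $(m-1)|V(\Hg_k)|$ hanging off it, and then inserts a near-optimal $\Hg_\epsilon$ into the base copy to get the same edge count.

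The reverse inequality is where your proposal has a genuine gap. Your block-contraction argument needs that an arbitrary $\G'\simeq\G$ carries at least $(m-1)|V(\Hg_k)|-o(|V(\G_k)|)$ intra-block edges, and this is simply false: nothing prevents a bi-Lipschitz representative $\G'$ from having \emph{no} intra-block edges at all (each block could be connected only through neighbouring blocks). The Farber hypothesis tells you that most $R$-balls in $\G_k$ look like balls in $Cay(\Gamma)$, but it says nothing about where the edges of an arbitrary bi-Lipschitz $\G'$ sit, so the sentence ``which forces $\G'$ to contain, or to be modifiable to contain without significant cost, a matching intra-block structure'' is exactly the missing lemma, not its proof. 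Without this, contraction only yields $c(\Hg)\le m\,c(\G)$, which is too weak.

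The paper avoids this difficulty altogether. Having produced the model $\G'\simeq\G$ of the form ``base copy $\Hg_k$ with bounded-depth trees attached'', it invokes its Lemma~\ref{lemma1} on terminal vertices (iterated up the tree depth, at most $m$ times): every near-optimal representative of $\G'$ can be replaced, without increasing the edge number and staying in the same bi-Lipschitz class, by one in which the suspended trees are untouched and all modifications occur inside the base copy. Thus optimising the edge number of $\G'$ is \emph{literally the same} as optimising the edge number of $\Hg$, and the single computation
\[
e(\G_\epsilon)=\frac{m-1}{m}+\frac{e(\Hg_\epsilon)}{m}
\]
gives both inequalities at once upon taking the infimum. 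In short, the key device you are missing is not a Farber-type local argument but the terminal-vertex normalisation Lemma~\ref{lemma1}, which lets the paper bypass the intra-block bookkeeping entirely.
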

 
%Finally Elek showed that for box spaces having property A is equivalent to being hyperfinite and the group being amenable. We show that this also holds for Farber sequences and the induced sequence of Schreier graphs. We also show that one implication can be generalised to arbitrary graph sequences.
%\begin{theorem}
%	For a finitely generated group $G$ and a sofic approximation of the form $\mathcal{G} = \{Sch(G/G_n)\}_{n\in\mathbb{N}}$ (hence $\{G_n\}_{n\in\mathbb{N}}$ is a Farber sequence for $G$), where $G_n\le G$ the following are equivalent:
%	\begin{enumerate}
%		\item $G$ is amenable
%		\item $\mathcal{G}$ has property A
%		\item $\mathcal{G}$ is hyperfinite.
%	\end{enumerate}
%\end{theorem}

%\begin{theorem}
%	A graph sequence $\G$ that has property A, is also hyperfinite.
%\end{theorem}

\section{Cost 1 is a coarse invariant}

Let us start with a little remark. In order be an interesting invariant, it is important to see that the cost can attain more than only one value. Hence we give an example of a graph sequence where the cost is not one. Consider a graph sequence $\mathcal{G} = \{G_n \}_{n\in\mathbb{N}}$. The girth $gr(G_n)$ of a graph $G_n$ is the length of the smallest non-trivial cycle in this graph. A graph sequence has large girth if $ \lim\limits_{n\rightarrow \infty} gr(G_n)$ exists and tends to infinity. Elek shows in \cite{elekcombcost} that if a graph sequence has large girth, then its edge number is equal to its cost. The free group on $r$ generators $F_r$ is residually finite and thus it has a filtration, say $\{N_k\}_{k\in\mathbb{N}}$. It is easy to see that the box space $\square_{N_k} F_r$ has large girth and that for every vertex there are $r$ edges. Hence $c(\square_{N_k} F_r) = e(\square_{N_k} F_r) = r$.\newline

Now that we have indicated that having cost $1$ is not trivial, we show it is a coarse invariant. Consider a graph sequence $\mathcal{G} = \{G_n \}_{n\in\mathbb{N}}$. Fix $k\ge 0$. We construct $\mathcal{G}'$, which is the graph sequence $\mathcal{G}$ where we add at most $k$ vertices to each vertex. More rigorously: for every vertex $x$ in some $G_n$ we can add at most $k$ vertices $x_1, \dots, x_k$ and $k$ edges $(x,x_1),\dots,(x,x_k)$ to obtain $G_n'$. We will call these added vertices terminal (or terminal vertices). One can see that $c(\mathcal{G})=1$ if and only if $c(\mathcal{G}') = 1$. We show this by proving that minimising the edge number of $\G'$ through equivalent graph sequences is the same as minimising the edge number of $\G$ and leaving the terminal vertices untouched.

\begin{lemma}
Given a graph sequence $\G'$ and the graph sequence $\G$ obtained by deleting the terminal vertices (i.e. vertices of degree $1$) of $\G'$, there exists $\forall \epsilon>0$ a graph sequence $\G'_\epsilon$ such that
\begin{enumerate}
	\item $\G'_\epsilon \simeq \G'$;
	\item $e(\G'_\epsilon)\le c(\G') + \epsilon$;
	\item if $(x,y)$ is an edge in $\G'_\epsilon$, but not in $\G'$ (or vice versa), then $x,y\in \G$ (i.e. $x$ and $y$ are not terminal vertices).
\end{enumerate}
\label{lemma1}
\end{lemma}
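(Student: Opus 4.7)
The plan is to start from a near-optimal graph sequence $\Hg \simeq \G'$ and ``project'' the terminals onto their parents while reinserting the original terminal edges. By the definition of cost as an infimum, choose $\Hg \simeq \G'$ with $e(\Hg) \le c(\G') + \epsilon/2$. Let $\pi \colon V(\G') \to V(\G')$ be the idempotent map fixing every non-terminal and sending each terminal $t$ to its unique $\G'$-neighbour $p(t)$. On the vertex set $V(\G')$, define $\G'_\epsilon$ with edges
\[ E(\G'_\epsilon) \;=\; E_T \;\cup\; \bigl\{(\pi(x),\pi(y)) : (x,y) \in E(\Hg),\; \pi(x) \neq \pi(y)\bigr\}, \]
where $E_T := \{(t,p(t)) : t \text{ terminal}\}$. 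The terminal-incident edges of $\G'_\epsilon$ are exactly $E_T \subseteq E(\G')$, so condition~3 holds automatically.

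For condition~1, the identity is $L'$-bi-Lipschitz with $L'$ depending only on the bi-Lipschitz constant $L$ of $\Hg \simeq \G'$. One direction: any $\Hg$-path between non-terminals projects to a $\G'_\epsilon$-walk of at most the same length, whence $d_{\G'_\epsilon} \le d_{\Hg} \le L\, d_{\G'} + L$. The other direction: each edge of $\G'_\epsilon$ is either in $E_T$ (corresponding to a single $\G'$-step) or obtained from some $(x,y) \in E(\Hg)$ via projection, in which case $d_{\G'}(\pi(x),\pi(y)) \le 2 + d_{\G'}(x,y) \le 2 + L$ by using the $E_T$-edges $(x,\pi(x))$ and $(y,\pi(y))$. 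Pairs involving terminals then follow because each terminal is attached to its parent in $\G'_\epsilon$ by an $E_T$-edge.

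The main obstacle is condition~2. The crude bound $|E(\G'_\epsilon)| \le |T| + |E(\Hg)|$ produces an additive error $|T_n|/|V(\G'_n)|$ which need not vanish. The key observation is that every $\Hg$-edge of the form $(t,p(t))$ projects to a self-loop and thus contributes no edge to $\G'_\epsilon$, so such self-loops ``pay for'' the $|T|$ edges of $E_T$. I would therefore modify $\Hg$ so that each terminal $t$ has $\deg_{\Hg}(t) = 1$ with unique incident edge $(t,p(t))$: for each terminal with $(t,p(t)) \notin E(\Hg)$, choose an existing edge $(t,u) \in E(\Hg)$, delete it, insert $(t,p(t))$, and, when necessary to preserve bi-Lipschitz, add a compensating non-terminal edge $(p(t),u)$. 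The connectivity lower bound $|E_2(\Hg)| \ge |T|$ (every connected component of the terminal-induced subgraph of $\Hg$ must attach to the non-terminal part) supplies just enough slack for these swaps to be executed with asymptotically negligible edge-number cost. After the modification, the projected edge set coincides with $E_1(\Hg)$, yielding $|E(\G'_\epsilon)| = |T| + |E_1(\Hg)| = |E(\Hg)|$ and therefore $e(\G'_\epsilon) \le e(\Hg) \le c(\G') + \epsilon$. The delicate bookkeeping of these edge swaps, and in particular the verification that the compensating edges either already belong to $E_1(\Hg)$ or can be counted against the ``excess'' $|E_2(\Hg)| - |T|$, is the most intricate part of the argument.
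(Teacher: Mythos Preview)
Your projection idea is neat for conditions 1 and 3, but the paper takes a more direct route that sidesteps the edge-count deficit you struggle with. The paper starts from a near-optimal $\Hg'_\epsilon$ and performs three rounds of \emph{one-for-one} edge swaps: (i) every terminal--terminal edge $(y_1,y_2)$ is replaced by $(x,y_2)$ where $x$ is an internal neighbour of $y_1$ (iterated at most $L'_\epsilon$ times until no such edges remain); (ii) if a terminal $y$ is joined to several internal vertices, each extra edge $(x',y)$ is swapped for $(x',x)$; (iii) if the sole remaining internal neighbour of $y$ is not its $\G'$-parent, the edge is swapped for $(p(y),y)$. Each swap removes exactly one edge and inserts exactly one, so condition~2 holds verbatim for every intermediate graph, and the bi-Lipschitz constant is controlled stage by stage.

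Your route, by contrast, first adds all of $E_T$ and then has to ``pay for'' those $|T|$ edges. The payment mechanism you sketch is incomplete: your modification step only treats terminals with $(t,p(t))\notin E(\Hg)$, so it does not achieve $\deg_{\Hg}(t)=1$ when $(t,p(t))$ is already present but $t$ has further neighbours; and each compensating edge $(p(t),u)$ you insert \emph{increases} the count, so the claimed identity $|E(\G'_\epsilon)| = |E(\Hg)|$ does not follow from what you wrote. The inequality $|E_2(\Hg)|\ge |T|$ is correct, but it does not by itself license adding $|T|$ new terminal edges while keeping the total fixed---you still need to exhibit, for each new edge, a specific old edge to delete without destroying the bi-Lipschitz bound, and that is exactly what the paper's three-stage swap procedure does explicitly. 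In short, your projection framework can be completed, but only by carrying out essentially the same local swaps the paper performs; the paper's formulation makes the edge-count preservation automatic rather than ``the most intricate part of the argument''.
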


\begin{proof}
	Note that by the definition of cost, graph sequences $\Hg'_\epsilon$ (these are $L'_\epsilon$-bi-Lipschitz to $\G'$) that satisfy point $1$ and $2$ trivially exist, it is enough to show that from such sequences we can construct new ones that have property $3$ but keep properties $1$ and $2$. We will use the following terminology. A vertex in $\G$ will be called an internal vertex. The  vertex in $\G$ that connects a terminal vertex $y$ (in $\G'$) to $\G$ will be called the base vertex of $y$. Now notice that we have the following types of edges in $\Hg'_\epsilon$:
	\begin{enumerate}
		\itemsep0em
		\item edges between terminal vertices;
		\item edges between internal vertices;
		\item edges between terminal vertices and internal vertices.
		%\item edges between terminal vertices and internal points that are not their base point.
	\end{enumerate}
	We show that $\Hg_\epsilon'$ can be chosen such that it does not contain edges of type one. Suppose an edge $(y_1,y_2)$ is an edge between terminal vertices in $\Hg'_\epsilon$ such that $y_1$ is connected to an internal vertex $x$, then we remove $(y_1,y_2)$ and add the edge $(x,y_2)$ (see Figure \ref{type1}). One can do this for all such edges and obtain a graph sequence that is $2$-bi-Lipschitz equivalent to $\Hg'_\epsilon$. Now, since between any terminal vertex and its base vertex there is a path of length at most $L'_\epsilon$ in  $\Hg'_\epsilon$, any edge between terminal vertices is $L'_\epsilon$-close to an edge between a terminal vertex and an internal vertex. This means that repeating this process at most $L'_\epsilon$ times gives us a graph sequence that is $2L_\epsilon ' $-bi-Lipschitz to $\Hg_\epsilon'$. Hence we can assume that $\Hg_\epsilon'$ does not have edges of type one.
	
	\begin{minipage}{\linewidth}
		\centering
		\begin{minipage}{0.47\linewidth}
			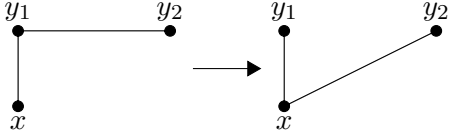
\begin{figure}[H]
				\begin{tikzpicture}[>=triangle 60]
				\draw[fill=black] (0,-1) circle (2pt) node[align=left,  below] {$x$} --
				(0,0) circle (2pt) node[align=center,   above] {$y_1$} --
				(2,0) circle (2pt) node[align=center, above] {$y_2$};
				\draw[->] (2.3,-0.5) -- (3.2,-0.5);
				\draw[fill=black] (3.5,0) circle (2pt) node[align=center,   above] {$y_1$} -- 
				(3.5,-1) circle (2pt) node[align=left,  below] {$x$} --
				(5.5,0) circle (2pt) node[align=center, above] {$y_2$};
				\end{tikzpicture}
				\caption{Construction for edges\\\hspace{\textwidth} of type one.}
				\label{type1}
			\end{figure}
		\end{minipage}
		%	\begin{minipage}{0.47\linewidth}
		%		\begin{figure}[H]
		%			\begin{tikzpicture}[>=triangle 60]
		%			\draw[fill=black] (0,-1) circle (2pt) node[left] {$x$} --
		%			(1,0) circle (2pt) node[align=center,   above] {$y$} --
		%			(2,-1) circle (2pt) node[right] {$x'$};
		%			\draw[->] (2.3,-0.5) -- (3.2,-0.5);
		%			\draw[fill=black] (4.5,0) circle (2pt) node[align=center,   above] {$y$} --
		%			(3.5,-1) circle (2pt) node[left] {$x$} --
		%			(5.5,-1) circle (2pt) node[right] {$x'$};
		%			\end{tikzpicture}
		%			\caption{Construction for edges\\\hspace{\textwidth} of type three.}
		%			\label{type2}
		%		\end{figure}
		%	\end{minipage}
		\begin{minipage}{0.47\linewidth}
			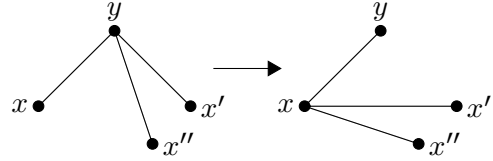
\begin{figure}[H]
				\begin{tikzpicture}[>=triangle 60]
				\draw[fill=black] (0,-1) circle (2pt) node[left] {$x$} --
				(1,0) circle (2pt) node[align=center,   above] {$y$} --
				(2,-1) circle (2pt) node[right] {$x'$};
				\draw[fill=black] (1,0) circle (0.1pt) --
				(1.5,-1.5) circle (2pt) node[right] {$x''$};
				\draw[->] (2.3,-0.5) -- (3.2,-0.5);
				\draw[fill=black] (4.5,0) circle (2pt) node[align=center,   above] {$y$} --
				(3.5,-1) circle (2pt) node[left] {$x$} --
				(5.5,-1) circle (2pt) node[right] {$x'$};
				\draw[fill=black] (3.5,-1) circle (0.1pt) --
				(5,-1.5) circle (2pt) node[right] {$x''$};
				\end{tikzpicture}
				\caption{Construction for edges\\\hspace{\textwidth} of type three (a).}
				\label{type2}
			\end{figure}
		\end{minipage}
	\end{minipage}
	\newline

	Now we look at edges of type three. Take a terminal vertex $y$. If it is connected to more than one internal vertex, then we choose one of these vertices (for example $x$) and reconnect as follows: take any other vertex connected to $y$ (for example $x'$). Then we delete the edge $(x',y)$ and introduce the edge $(x',x)$. (see Figure \ref{type2}). Doing this for all terminal vertices gives a graph sequence that is $2$-bi-Lipschitz to $\G$. We can assume terminal vertices in $\Hg_\epsilon'$ are connected to at most one internal vertex.
	
	Now it is possible that some terminal vertices are not linked to their base vertex $x$, but to some other internal vertex $x'$. However since there is no edge between the terminal vertex $y$ and its base vertex $x$, there is a path of length at most $L_\epsilon '$ between them. The first edge of this path links $y$ to some internal vertex $x'$. We delete the edge $(x',y)$ and add the edge $(x,y)$. See Figure \ref{type3}. It is clear this gives a graph sequence that is $L_\epsilon '$-bi-Lipschitz to $\Hg_\epsilon '$. Note that we can do this independently for all such edges (since apart from the first edge these paths are in $\G$). Note that this last graph sequence has all the needed properties. Replacing $\Hg_\epsilon'$ by this graph sequence concludes the proof.
	
	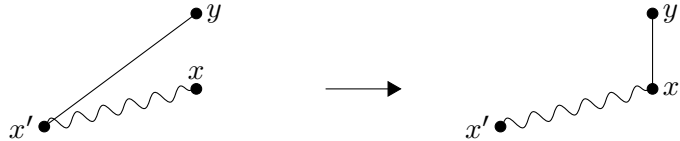
\begin{figure}[H]
		\centering
		\begin{tikzpicture}[>=triangle 60]
		\draw[fill=black] (2,0) circle (2pt) node[right] {$y$}
		(2,-1) circle (2pt) node[align=center,   above] {$x$}
		(0,-1.5) circle (2pt) node[left] {$x'$};
		\draw (2,0) -- (0,-1.5);
		\draw[style={decorate, decoration=snake}] (0,-1.5)--(2,-1);
		\draw[->] (3.7,-1) -- (4.7,-1);
		\draw[fill=black] (8,0) circle (2pt) node[ right] {$y$}
		(8,-1) circle (2pt) node[align=center,   right] {$x$}
		(6,-1.5) circle (2pt) node[left] {$x'$};
		\draw (8,-1) -- (8,0);
		\draw[style={decorate, decoration=snake}] (6,-1.5)--(8,-1);
		\end{tikzpicture}
		\caption[superthing]
		{Construction for edges of type three (b).}
		\label{type3}
	\end{figure}

\end{proof}
\begin{remark}
	Note that we do not have to delete all terminal vertices, we can also just delete subsets of terminal vertices with the same result.
\end{remark}

\begin{corollary}\label{cor2}
Given a graph sequence $\G'$ and the graph sequence $\G$ obtained by deleting the terminal vertices (i.e. vertices of degree $1$) of $\G'$, then $\G'$ having cost $1$ is equivalent to $\G$ having cost $1$.
\end{corollary}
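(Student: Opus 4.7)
The proof will establish both implications, with the forward direction $c(\G')=1 \Rightarrow c(\G)=1$ being the one that genuinely uses Lemma \ref{lemma1}.

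For the forward direction, the plan is as follows. Given $\epsilon>0$, apply Lemma \ref{lemma1} to obtain a graph sequence $\G'_\epsilon$ bi-Lipschitz to $\G'$ with $e(\G'_\epsilon)\le c(\G')+\epsilon = 1+\epsilon$, and such that all modifications occurred among internal vertices only. Define $\G_\epsilon$ to be the induced subgraph of $\G'_\epsilon$ on internal vertices, i.e.\ delete exactly the terminal vertices of $\G'_\epsilon$ and their unique incident edges. The key properties I would verify are: (a) $\G_\epsilon \simeq \G$, because the $L$-bi-Lipschitz identity map from $\G'_\epsilon$ to $\G'$ restricts to a bi-Lipschitz identity map between the internal vertex sets (removing degree-one vertices from a connected graph does not alter distances between the remaining vertices, so the restriction of the original metrics is exactly the metric of $\G$, resp.\ $\G_\epsilon$); and (b) the edge count behaves well, because removing $t$ terminal vertices removes exactly $t$ edges (property 3 of the lemma ensures no inter-terminal edges and terminal degrees equal to $1$), and $t\le k|V(\G)|$ where $k$ is the bound on terminals per internal vertex.

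A short arithmetic check will then give the bound
\[ e(\G_\epsilon) \;\le\; (k+1)\epsilon + 1, \]
obtained from $e(\G'_\epsilon) = (|E(\G_\epsilon)|+t)/(|V(\G)|+t) \le 1+\epsilon$ after isolating $|E(\G_\epsilon)|/|V(\G)|$. Since $c(\G)\ge 1$ always and $\epsilon$ is arbitrary, we conclude $c(\G)=1$.

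For the reverse direction, the strategy is essentially the opposite: take $\Hg$ bi-Lipschitz to $\G$ with $e(\Hg)\le 1+\epsilon$, and construct $\Hg'$ by re-attaching each terminal vertex of $\G'$ to its original base vertex in $\Hg$. Then $\Hg'\simeq\G'$ via the identity, with bi-Lipschitz constant only slightly worse (bounded by $L+2$, since terminal-to-terminal distances only differ from internal-to-internal distances by at most $2$). The new edge number satisfies
\[ e(\Hg') \;=\; \frac{|E(\Hg)|+t}{|V(\G)|+t} \;\le\; \frac{(1+\epsilon)|V(\G)|+t}{|V(\G)|+t} \;\le\; 1+\epsilon, \]
so $c(\G')\le 1+\epsilon$ for every $\epsilon>0$, giving $c(\G')=1$.

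The only delicate point is the bi-Lipschitz bookkeeping in both directions, in particular making sure that the identity map between vertex sets remains bi-Lipschitz after adding or removing the terminal decoration. The rest is a linear computation on edge-to-vertex ratios exploiting that terminal vertices in the modified sequence carry exactly one edge each. I do not anticipate any substantial obstacle; the work is really already done in Lemma \ref{lemma1}.
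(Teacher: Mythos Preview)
Your proposal is correct and follows essentially the same architecture as the paper: the easy direction (from $c(\G)=1$ to $c(\G')=1$) is handled by re-attaching terminals to an almost-optimal $\G_\epsilon$, and the hard direction uses Lemma~\ref{lemma1} to arrange that the almost-optimal $\G'_\epsilon$ leaves terminal vertices untouched, then restricts to internal vertices.

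The only noteworthy difference is in the arithmetic step for the hard direction. The paper observes that deleting terminal vertices preserves the dimension of the cycle space $C_{\mathbb{Q}}$, and then bounds
\[
e(\G_{d\epsilon}) \le 1 + \liminf_{n}\frac{\dim_{\mathbb{Q}} C_{\mathbb{Q}}(G'_{\epsilon,n})}{|V(G'_{\epsilon,n})|/d} \le 1+d\epsilon,
\]
using that $|V(G'_n)|\le d|V(G_n)|$. Your route is more elementary: you simply note that removing $t$ degree-one vertices removes exactly $t$ edges, so
\[
\frac{|E(\G_\epsilon)|+t}{|V(\G)|+t}\le 1+\epsilon \quad\Longrightarrow\quad \frac{|E(\G_\epsilon)|}{|V(\G)|}\le 1+(k+1)\epsilon.
\]
Both arguments exploit the same structural fact (terminal vertices contribute nothing to the ``excess'' of edges over vertices); yours just says it without invoking the cycle space. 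As a small bonus, your bi-Lipschitz justification for $\G_\epsilon\simeq\G$ (that shortest paths between internal vertices never enter a degree-one vertex, so restriction of the metric is the induced metric) is stated more explicitly than in the paper.
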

\begin{proof}
If $c(\mathcal{G})=1$, we can find equivalent graph sequences $\mathcal{G}_\epsilon$ such that $e(\mathcal{G}_\epsilon)\le 1+\epsilon$. It is clear that changing the copy of $\G$ in $\G'$ by $\G_\epsilon$ yields a graph sequence $\G'_\epsilon$ equivalent to $\G'$ which has edge number lesser or equal than $1+\epsilon$.\newline

On the other hand, if $c(\G')=1$, then we can find equivalent graph sequences $\G'_\epsilon$ such that $e(\G'_\epsilon)\le 1+\epsilon$ and moreover, such that the changes in $\G'_\epsilon$ only appear in the $\G$-part. This is due to Lemma \ref{lemma1}. This means that reducing  $\G'_\epsilon$ to the vertices of $\G$ gives a graph sequence that is bi-Lipschitz to $\G$. We calculate the edge number of this graph sequence we will call $\G_{d\epsilon} = \{ G_{d\epsilon,n } \}_{n\in\mathbb{N}}$.  Note that the expression $C_\mathbb{Q} (G_{d\epsilon,n })$ denotes the vector space generated by the cycles of $G_{d\epsilon,n }$ over the field $\mathbb{Q}$. Moreover since $\G_{d\epsilon}$ is just $\G'_\epsilon$ without some terminal vertices, one sees that $\dim_{\mathbb{Q}} C_\mathbb{Q} (G_{\epsilon,n }') = \dim_{\mathbb{Q}} C_\mathbb{Q} (G_{d\epsilon,n })$. Note that since the degree of $\G'$ is given by $d$, we can bound the number of vertices of $G_n'$ from above by $d|V(G_n)|$. This allows us to calculate:
\begin{equation*}
 \begin{aligned}
 e(\G_{d\epsilon}) &= \liminf_{n\rightarrow \infty} \frac{|E(G_{d\epsilon,n})|}{|V(G_{d\epsilon,n})|}\\
 &= \liminf_{n\rightarrow \infty} \frac{|V(G_{d\epsilon,n})| + \dim_{\mathbb{Q}} C_\mathbb{Q} (G_{d\epsilon,n}) }{|V(G_{d\epsilon,n})|}\\
 &\le 1 + \liminf_{n\rightarrow \infty} \frac{\dim_{\mathbb{Q}} C_\mathbb{Q} (G'_{\epsilon,n})}{|V(G'_{\epsilon,n})|/d}\\
 &\le 1 + d\epsilon.
 \end{aligned}
\end{equation*}
This finishes the proof.
\end{proof}
We now move on to show that cost $1$ is an invariant of coarse equivalence. We do this by constructing coarse equivalences between the graph sequences of the form we described above.

\begin{theorem}
If two graph sequences $\mathcal{G}$ and $\mathcal{H}$ are coarsely equivalent and one of them has cost $1$, then the other one also has cost $1$. \label{cost1lemma}
\end{theorem}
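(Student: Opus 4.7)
The approach is to apply Corollary \ref{cor2} twice, on either side of a common ambient graph sequence. By the definition of coarse equivalence of graph sequences, there exists a uniform constant $A$ and $(A,A)$-quasi-isometries $\phi_n:G_n\to H_n$. Set $V_n := V(G_n)\sqcup V(H_n)$. The key step is to construct on this common vertex set two auxiliary graph sequences $\mathcal{B}=\{B_n\}_{n\in\mathbb{N}}$ and $\mathcal{C}=\{C_n\}_{n\in\mathbb{N}}$ such that $B_n$ is $G_n$ with every $y\in V(H_n)$ attached as a terminal vertex, $C_n$ is $H_n$ with every $x\in V(G_n)$ attached as a terminal vertex, and $\mathcal{B}\simeq\mathcal{C}$ via the identity on $V_n$. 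Granting this, Corollary \ref{cor2} yields the chain
\[ c(\G)=1 \iff c(\mathcal{B})=1 \iff c(\mathcal{C})=1 \iff c(\Hg)=1, \]
proving the theorem.

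Concretely, for each $y\in V(H_n)$ pick $g_n(y)\in V(G_n)$ with $d_{H_n}(\phi_n(g_n(y)),y)\le A$, which exists by $A$-density, and let $B_n$ consist of $G_n$ together with an edge $\{y,g_n(y)\}$ for every $y\in V(H_n)$; dually, let $C_n$ consist of $H_n$ together with an edge $\{x,\phi_n(x)\}$ for every $x\in V(G_n)$. To legitimise Corollary \ref{cor2} we need a uniform bound on the number of added terminal vertices per vertex: the fibre $g_n^{-1}(x)$ lies in the $A$-ball around $\phi_n(x)$ in $H_n$, so has at most $d^A$ elements; similarly $\phi_n^{-1}(y)$ sits in a $G_n$-ball of radius $A^2$ by the lower quasi-isometry inequality, so has at most $d^{A^2}$ elements. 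Both bounds depend only on $A$ and $d$, not on $n$.

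It then remains to show that the identity $V_n\to V_n$ is uniformly bi-Lipschitz between $B_n$ and $C_n$. Paths in one are rerouted through the bridging edges to paths in the other, using the quasi-isometry inequalities. For example, for $u,v\in V(G_n)$,
\[ d_{C_n}(u,v)\le 1+d_{H_n}(\phi_n(u),\phi_n(v))+1\le A\cdot d_{G_n}(u,v)+A+2 = A\cdot d_{B_n}(u,v)+A+2, \]
and the mixed case and the $V(H_n)$-case are treated analogously, this time also invoking the definition of $g_n$ and the lower bound $d_{H_n}(\phi_n(u),\phi_n(v))\ge d_{G_n}(u,v)/A-A$. Since $d_{B_n}(u,v)\ge 1$ whenever $u\ne v$, the additive constants fold into the multiplicative ones, yielding a bi-Lipschitz constant $L=L(A,d)$ independent of $n$.

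The main anticipated obstacle is precisely this uniformity in $n$: both the terminal-vertex counts and the bi-Lipschitz constant must depend only on the input data $A$ and $d$, not on the (growing) graphs $G_n$ and $H_n$ themselves. Once this is in place, the equivalence chain above closes and gives the theorem.
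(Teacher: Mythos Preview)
Your argument is correct and follows the same strategy as the paper: two applications of Corollary~\ref{cor2} (together with the remark that one may delete only a chosen subset of terminal vertices) linked by a bi-Lipschitz equivalence between a ``$\G$ with terminals'' sequence and an ``$\Hg$ with terminals'' sequence on a common vertex set. The only real difference is in how that common vertex set is manufactured. The paper first modifies $\Hg$ into $\Hg'$ by hanging extra vertices off points with large $f$-fibres, so that the induced $f':\G\to\Hg'$ becomes \emph{injective}; it then builds $\Hg''$ on $V(\Hg')$ as the image copy of $\G$ with the non-image points reattached as terminals, and verifies $\Hg'\simeq\Hg''$ by a somewhat lengthy case analysis. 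Your disjoint-union construction $V_n=V(G_n)\sqcup V(H_n)$ sidesteps the injectivity step entirely and makes the symmetry between $\G$ and $\Hg$ manifest; the edge-by-edge verification that $\mathcal{B}\simeq\mathcal{C}$ is then very clean (each $B_n$-edge has $C_n$-length $\le 2A+2$, and each $C_n$-edge has $B_n$-length $\le 3A^2+A+2$, say). Both routes yield the same conclusion with the same ingredients, but yours is a tidier packaging.
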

\begin{proof}
We take a coarse equivalence $f:\mathcal{G}\rightarrow \mathcal{H}$ and suppose $c(\mathcal{G})=1$. Note that by throwing out graphs and reordering the sequences we can assume that restricting $f$ gives an $(A,A)$-quasi-isometry $f_n:G_n\rightarrow H_n$ for every $n$ with uniform $A$ (Lemma \ref{AA1}).
The map $f$ is not necessarily injective, but we know that all vertices in $G_n$ that have the same image over $f$ must be inside the ball $B(x,A^2)$, where $x$ is one of these vertices. Note that by the bounded degree of the graphs, we can find a bound on the number of elements inside this ball (say $C$). We construct a graph sequence $\mathcal{H}'$ and a map $f':\G\rightarrow \Hg'$ as follows: take $y\in H_n$. If $|f^{-1}(y)|>1$, then we add $|f^{-1}(y)|-1$ points to $H_n$ and connect these to $y$. Now we construct a new map $f'$ by bijectively mapping each of these $|f^{-1}(y)|-1$ points to one of the newly added points in $\Hg'$.

\begin{figure}[H]
\centering
\begin{tikzpicture}
\draw[fill=black] (2,0) circle (2pt) node[align=left,  above] {$y$}
(3.4,1.4) circle (2pt) node[align=center,   right] {$y_1$} 
(4,0) circle (2pt) node[align=center, right] {$y_2$}
(3.4,-1.4) circle (2pt) node[align=center,   right] {$y_3$}
(0.2,-0.8) circle (2pt) node[align=center,   left] {$y_{|f^{-1}(y)|-1}$};
\draw  (2,0) -- (3.4,1.4)
(2,0) -- (4,0)
(2,0) -- (3.4,-1.4)
(2,0) -- (0.2,-0.8);
\draw (1.5,-1.2) node{$\dots$};
\end{tikzpicture}
\caption[superthing]
{Construction of $\mathcal{H}'$.}
\label{thing}
\end{figure}
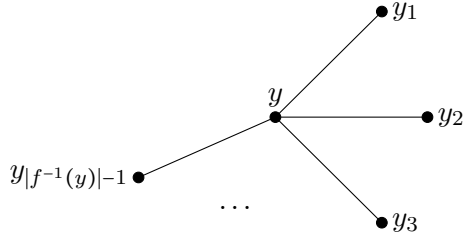

As such we obtain $\mathcal{H}'$ (with new degree $d+C$ where $d$ is the original degree of $\Hg$) and $f'$ (with some constant $A'$). Firstly remark that $\Hg$ has cost $1$ if and only if $\Hg'$ has, by Corollary \ref{cor2}. Note also that the new coarse equivalence $f': \mathcal{G}\rightarrow \mathcal{H}' $ is injective. From this we will can construct $\mathcal{H}''$ on the vertices of $\Hg'$. Take any two vertices in $\mathcal{G}$ connected by an edge. By injectivity these vertices have different images. We introduce an edge between these image points. For vertices of $\Hg'$ that are not in the image we break all edges that end in this vertex and create an edge that links it to the closest image point (in a similar way as when we constructed $\Hg'$, but now we reconnect existing vertices instead of creating new ones). One sees that $\Hg''$ is just $\mathcal{G}$ with extra vertices attached. We show that $\mathcal{H}'\simeq \mathcal{H}''$. 

%This is the graph $\mathcal{G}$ (i.e. since $f'$ is injective we identify $\mathcal{G}$ with its image) and we link points of $\Hg'$ that are not in the image to the closest point that is (in a similar way as when we constructed $\Hg'$, but now we reconnect existing points instead of creating new ones). We show that $\mathcal{H}'\simeq \mathcal{H}''$. 

Take an edge in $\mathcal{H}''$, that is not in $\mathcal{H}'$. If this edge links a vertex in the image of $f'$ to one that is not in the image of $f'$, then there must be a path of length less than or equal to $R$ (where $R$ is a constant bounding how far non-image points are from image points of $f'$) in $\mathcal{H'}$ between these two vertices since $f'$ is a coarse equivalence. If on the other hand this is an edge between two image points, then this means that the inverse images of these vertices in $\G$ are only separated by distance one. The function $f'$ being an $A'$-coarse equivalence means that the original two vertices are at most distance $2A'$ apart in $\Hg'$.

Now take an edge $(x,y)$ in $\mathcal{H}'$. There are three possibilities:
\begin{enumerate}
\itemsep0em
\item  The two vertices are not image points. This means that either they are connected to the same image point in $\mathcal{H}''$, and so they are only distance two apart. Or these are connected to two different image points, which are at most $2R+1$ apart in $\Hg'$. As such their pre-images are at most $A'(2R+1) +A'^2$ apart. By construction of $\Hg''$ (i.e. $\Hg''$ contains a copy of $\G$) the same path exists between the image points. Hence $x$ and $y$ are at most $A'(2R+1) +A'^2+2$ apart in $\mathcal{H}''$.
\item  One vertex is an image point and the other is not. See part one, but now the image points are at most distance $R+1$ apart in $\Hg'$ and thus our original vertices are at most  $A'(R+1) +A'^2+1$ apart in $\mathcal{H}''$.
\item  The two vertices are image points. There is a path of length at most $A'+A'^2$ between their pre-images and thus between $x$ and $y$.
\end{enumerate}
Taking $L$ the maximum of all these values, we find that $\Hg'\simeq \Hg''$ by the constant $L$. Once again using Corollary \ref{cor2} and the fact that $\G$ has cost $1$, we obtain that  $\Hg''$ also has cost $1$. This finishes the proof.
\end{proof}

\section{Hyperfiniteness}\label{hyperfiniteness}

In \cite{elekcombcost} Elek introduces hyperfiniteness as follows:
\begin{definition}
	A graph sequence $\G = \{G_n \}_{n\in \mathbb{N}}$ is hyperfinite if $\forall n \in \mathbb{N}$ and $\forall \epsilon >0$ there exist sets $E_n^\epsilon\subset E(G_n)$ and a constant $K>0$ such that 
	\begin{enumerate}
		\item $\lim\limits_{n\to\infty} \frac{\lvert E_n^\epsilon\rvert }{\lvert V(G_n)\rvert} < \epsilon$,
		\item $\forall n\in \mathbb{N}$ one has that the connected components of $G_n\backslash E_n^\epsilon$ contain at most $K$ vertices.
	\end{enumerate}
\end{definition}

We show that hyperfiniteness is a coarse invariant. Firstly we show it is preserved by bi-Lipschitz equivalence of graph sequences.

\begin{lemma}
If $\mathcal{G}\simeq \G'$ and $\G$ is hyperfinite, then $\G'$ is as well.
\end{lemma}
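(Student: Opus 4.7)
The plan is to transport a hyperfinite decomposition of $\G$ across the identity map to obtain one for $\G'$, paying a constant multiplicative price in the density of cut edges.

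Fix $\epsilon'>0$ and let $L$ be the bi-Lipschitz constant between $\G$ and $\G'$, and let $d,d'$ be the degree bounds of $\G$ and $\G'$ respectively. The key constant will be $C:=d'\cdot(d^L)$, the number of pairs in an $L$-ball in $G_n$ meeting at least one endpoint of a prescribed $G_n$-edge, taking the $G_n'$-degree into account; I would set $\epsilon := \epsilon'/(2C)$ and invoke hyperfiniteness of $\G$ to get $E_n^\epsilon\subset E(G_n)$ with $\limsup |E_n^\epsilon|/|V(G_n)|<\epsilon$ and with connected components $C_{n,1},\dots,C_{n,m_n}$ of $G_n\setminus E_n^\epsilon$ all of size at most $K$.

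Next I define
\[ E_n^{\prime\epsilon'} := \{\, (x,y)\in E(G_n')\;|\; x\in C_{n,i},\ y\in C_{n,j},\ i\neq j\,\}. \]
Since $G_n'\setminus E_n^{\prime\epsilon'}$ only has edges whose endpoints lie in a common $C_{n,i}$, every connected component of $G_n'\setminus E_n^{\prime\epsilon'}$ is contained in some $C_{n,i}$ and therefore has at most $K$ vertices, yielding condition (2) of hyperfiniteness with the same constant $K$.

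It remains to bound $|E_n^{\prime\epsilon'}|$. If $(x,y)\in E_n^{\prime\epsilon'}$, then because the identity is $L$-bi-Lipschitz we have $d_{G_n}(x,y)\le L$, so there is a $G_n$-path of length at most $L$ from $x$ to $y$; since $x$ and $y$ lie in distinct components of $G_n\setminus E_n^\epsilon$, this path must cross some edge $e\in E_n^\epsilon$. I would then assign to $(x,y)$ one such edge $e$, and count, for each fixed $e=(u_1,u_2)\in E_n^\epsilon$, how many $(x,y)\in E_n^{\prime\epsilon'}$ can be assigned to it: both $x$ and $y$ lie in $B_{G_n}(u_1,L)\cup B_{G_n}(u_2,L)$, which has at most $2d^L$ vertices; on the other hand, fixing $x$, the vertex $y$ is one of at most $d'$ $G_n'$-neighbours of $x$. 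This gives $|E_n^{\prime\epsilon'}|\le 2d^L\cdot d'\cdot |E_n^\epsilon|\le 2C\cdot|E_n^\epsilon|$. Dividing by $|V(G_n')|=|V(G_n)|$ and taking $\limsup$ yields $\limsup |E_n^{\prime\epsilon'}|/|V(G_n')|<2C\epsilon=\epsilon'$, which is condition (1).

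The only subtle step is the last counting argument; the rest is bookkeeping. The main point to get right there is that one must use both the bi-Lipschitz bound (to force $G_n'$-edges to be short in $G_n$) and the bounded degree (to bound the size of $G_n$-balls and the $G_n'$-neighbourhoods). Everything else follows mechanically.
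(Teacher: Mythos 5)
Your proof is correct and follows essentially the same strategy as the paper's: transport the partition of $G_n$ across the $L$-bi-Lipschitz identity map, observe that any edge of $G_n'$ joining distinct components spans a $G_n$-path of length at most $L$ that must cross $E_n^\epsilon$, and bound the number of such edges by a constant depending only on $L$ and the degree bounds times $\lvert E_n^\epsilon\rvert$. Your bookkeeping, assigning each new cut edge to a crossing edge of $E_n^\epsilon$ and counting fibres, is a slightly cleaner organisation of the same path-counting the paper does, yielding the constant $2d^Ld'$ in place of the paper's $\left(\sum_{i=1}^{L} d^{i}\right)^2$.
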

\begin{proof}
We look at the `identity map' $\operatorname{Id}:\G\rightarrow \G'$ which is an $L$-bi-Lipschitz map for some $L>0$. Take $\epsilon >0$. Since $\G$ is hyperfinite there exists a partition $\{A^n_i\}$ where $n$ ranges over the positive integers and $i$ is a value in between one and a constant $k_n\in \mathbb{N}$ which depends on $n$. There is a constant $K$ that bounds the number of elements in these sets. Moreover the partition $\{A^n_i\}$ induces sets $E_n^\epsilon$ that contain all edges between these partitions.  Note that we can interpret going from $\G$ to $\G'$ as first deleting some edges and then adding some. Clearly deleting edges does not form any problems and does not affect hyperfiniteness, since we can just copy the $A^n_i$ and $K$. However adding edges could result in destroying the structure of the graph and thus losing this property. How many edges (proportionally to the vertices in the individual graphs) can we actually add for an arbitrary graph sequence and still have that $\operatorname{Id}$ is $L$-bi-Lipschitz (we take $L$ fixed here)? Note that we can only add an edge $(x,y)$ between components (i.e. $x\in A_i^n$ and $y\in A_j^n$, where $i\ne j$) if there is a path in $G_n$ between the vertices $x$ and $y$ of length less than or equal to $L$. We can thus bound the number of added edges between components by finding a bound on the number of paths of length $\le L$ that have start and ending point in different components in $G_n$.
This means that within distance $\le L$ of our starting point, there must be an edge going to another component. We can thus bound the number of paths by looking at how many points are $L$-close to edges between components. Our graph sequence $\G$ has bounded degree, let's say $d$. So given one edge, there are at most $\sum\limits_{i=1}^{L} d^{i}$ points close to one of the end points. This means that in the graph $G_n$ there are at most $|E_n^{\epsilon}|\left(\sum\limits_{i=1}^{L} d^{i}\right)^2$ paths that start in one component and end in another.

Hence copying the components $A^n_i$ into $\G'$ the number of edges between them might increase but at most by a factor $\left(\sum\limits_{i=1}^{L} d^{i}\right)^2$. So by taking $\epsilon' = \epsilon \left(\sum\limits_{i=1}^{L} d^{i}\right)^{-2}$ we obtain partitions that have the property $\liminf\limits_{n\rightarrow
\infty} \frac{|E_n^{\epsilon'}|}{|V(G_n')|}\le \epsilon$. This shows that $\G'$ is also hyperfinite.
\end{proof}

We now show hyperfiniteness is a coarse invariant.
\begin{lemma}
If $\G$ and $\Hg$ are coarsely equivalent and $\G $ is hyperfinite, then $\Hg$ is too.
\end{lemma}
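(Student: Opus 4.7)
The plan is to mirror, essentially verbatim, the architecture of the proof of Theorem \ref{cost1lemma}, replacing the ``cost $1$'' invariant by hyperfiniteness and replacing Corollary \ref{cor2} by an analogous ``terminal vertex'' statement. The preceding lemma already handles bi-Lipschitz equivalence, so the one genuinely new ingredient is the observation that adding or deleting a uniformly bounded number of terminal vertices (vertices of degree one) per vertex of a graph sequence preserves hyperfiniteness. Given a hyperfinite partition of $\G$ with components of size $\le K$ and inter-component edge set $E_n^\epsilon$, each terminal vertex in $\G'$ can be absorbed into the component of its unique neighbour; this enlarges the uniform component size bound to $K(d+1)$ without introducing any new inter-component edges, and the ratio $|E_n^\epsilon|/|V(G_n')|$ only decreases. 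Conversely, restricting a hyperfinite partition of $\G'$ to $V(G_n)$ leaves inter-component edge counts intact while shrinking the denominator by at most a factor $(d+1)$, so rescaling $\epsilon$ by $(d+1)$ recovers hyperfiniteness of $\G$.

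With this terminal-vertex invariance in hand, the rest of the argument is literally the construction used for cost $1$. First I would invoke Lemma \ref{AA1} to assume that the coarse equivalence $f\colon\G\to\Hg$ restricts to a uniform $(A,A)$-quasi-isometry $f_n\colon G_n\to H_n$ on each component. The non-injectivity of $f_n$ collapses at most $C=C(A,d)$ preimages onto any single image, so attaching at most $C$ terminal vertices at each point of $\Hg$ produces a graph sequence $\Hg'$ together with an injective coarse equivalence $f'\colon\G\to\Hg'$. By the terminal-vertex invariance just established, $\Hg$ is hyperfinite iff $\Hg'$ is hyperfinite.

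Next I would define $\Hg''$ on the vertex set of $\Hg'$: connect two image points of $f'$ by an edge iff their preimages in $\G$ are adjacent, and reattach each non-image point as a terminal to a single nearest image point of $f'$. The computation already performed in the proof of Theorem \ref{cost1lemma} shows that $\Hg'\simeq\Hg''$ with a uniform bi-Lipschitz constant $L$ depending only on $A'$ and the density constant $R$ of the coarse equivalence. Applying the previous bi-Lipschitz invariance lemma gives $\Hg'$ hyperfinite iff $\Hg''$ hyperfinite. But by construction $\Hg''$ is (isomorphic to) $\G$ with at most a uniformly bounded number of terminals attached per vertex, so the terminal-vertex invariance gives $\G$ hyperfinite iff $\Hg''$ hyperfinite. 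Chaining the four equivalences produces $\G$ hyperfinite $\iff$ $\Hg$ hyperfinite, which is the statement.

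The main obstacle is really only the terminal-vertex invariance, which I expect to be routine; the delicate bookkeeping, namely controlling how many terminals are added at each step in order to keep degrees and the constant $(d+1)$ uniformly bounded, has already been handled in the analogous construction for cost $1$ and transfers without modification.
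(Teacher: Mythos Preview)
Your proposal is correct and follows exactly the paper's approach: the paper likewise reuses the construction of $\Hg'$, $f'$, and $\Hg''$ from Theorem \ref{cost1lemma}, invokes the preceding bi-Lipschitz invariance lemma for $\Hg'\simeq\Hg''$, and appeals to the (there asserted as ``trivial'') terminal-vertex invariance of hyperfiniteness in both places you do. The only difference is that you actually spell out the terminal-vertex invariance argument, whereas the paper leaves it to the reader.
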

\begin{proof}
We follow the construction of $\Hg'$, $f'$ and $\Hg''$ as in Lemma \ref{cost1lemma} and refer the reader to that proof for the details.
Note that we can construct an injective coarse equivalence $f': \G \rightarrow \Hg'$ from a coarse equivalence $f:\G\rightarrow\Hg$ by adding extra vertices to $\Hg$. It is trivial to see that $\Hg$ is hyperfinite if and only if $\Hg'$ is. Now we find a $\Hg''$ which is bi-Lipschitz to $\Hg'$ by injecting $\G$ into $\Hg'$, and linking vertices that are not in the image to image points that are close by. Clearly $\Hg''$ is just $\G$ with extra vertices attached, so it is trivially hyperfinite. Moreover $\Hg''\simeq \Hg'$, so by the previous lemma $\Hg'$ is also hyperfinite. This concludes the proof.
\end{proof}

\section{Cost$-1$ behaves multiplicatively for subgroups}\label{costmulttheo}

Since box spaces originating from filtrations of residually finite groups are in fact graph sequences, one can consider the cost of such a group with respect to a given filtration. We show that cost$-1$ is multiplicative with respect to taking finite index subgroups. Here we consider cost with respect to a fixed filtration. It is not known whether the cost originating from a certain group depends on the chosen filtration. We first show Theorem \ref{supercostmult} in a more restrictive setting, where the subgroup is normal and the Farber sequence is in fact a filtration.
\begin{theorem}\label{multcost} 
Take $\Gamma$ a residually finite group and $\Lambda\triangleleft \Gamma$ with $[\Gamma:\Lambda]< \infty$. Consider a filtration $(N_k)_{k\in\mathbb{N}} $ of $\Gamma$ that is contained in $\Lambda$, then
$$ [\Gamma:\Lambda] (c(\square_{N_k} \Gamma)-1) =  c(\square_{N_k} \Lambda)-1.$$ 
\end{theorem}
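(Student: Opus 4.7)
The strategy is to prove the two inequalities $d(c(\square_{N_k}\Gamma) - 1) \le c(\square_{N_k}\Lambda) - 1$ and $c(\square_{N_k}\Lambda) - 1 \le d(c(\square_{N_k}\Gamma) - 1)$ separately, where $d := [\Gamma:\Lambda]$. The key structural input is the regular covering $G_k := Cay(\Gamma/N_k, \overline{S}) \twoheadrightarrow B := Cay(\Gamma/\Lambda, \overline{S})$ of degree $[\Lambda:N_k]$, well-defined because $N_k \le \Lambda$ is normal in $\Gamma$. Fix a spanning tree $T_B$ of the finite graph $B$ (with $d-1$ edges) and let $\tilde T_B \subset E(G_k)$ denote its lift: a spanning forest of $G_k$ with $[\Lambda:N_k]$ pairwise disjoint tree components, each on $d$ vertices and meeting every coset $C_i := t_i(\Lambda/N_k)$ in a unique vertex. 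Thus $|\tilde T_B| = (d-1)[\Lambda:N_k]$, the count that will drive the arithmetic.

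For the direction $d(c(\G) - 1) \le c(\Hg) - 1$, I would argue constructively. Given $\Hg_\epsilon \simeq \square_{N_k}\Lambda$ with $e(\Hg_\epsilon) \le c(\Hg) + \epsilon$, define a graph sequence $\G'_\epsilon$ whose $k$-th member lives on $V(G_k)$, carries the graph $\Hg_\epsilon$ on the identity coset $C_1$ (identified with $V(\Hg_\epsilon) = \Lambda/N_k$), and contains every edge of $\tilde T_B$. Bi-Lipschitz equivalence $\G'_\epsilon \simeq \square_{N_k}\Gamma$ follows because: (i) left multiplication by $t_i$ is a graph automorphism of $G_k$ mapping $C_1$ isometrically onto $C_i$ (by normality of $\Lambda$), so each $C_i$ with its induced $G_k$-metric is isometric to $C_1$, which is quasi-isometric to $H_k$ through Schreier's rewriting in $S'$; (ii) $\tilde T_B$ connects each coset to $C_1$ at distance at most $d-1$. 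A direct count gives $|E(G'_{\epsilon,k})| - |V(G'_{\epsilon,k})| = |E(H_{\epsilon,k})| - |V(H_{\epsilon,k})|$, whence
\[ e(\G'_\epsilon) - 1 \;=\; \frac{e(\Hg_\epsilon) - 1}{d} \;\le\; \frac{c(\Hg) - 1 + \epsilon}{d}, \]
and letting $\epsilon \to 0$ yields the inequality.

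For the reverse inequality $c(\Hg) - 1 \le d(c(\G) - 1)$, the construction is dual: given $\G_\epsilon \simeq \square_{N_k}\Gamma$ with $e(\G_\epsilon) \le c(\G) + \epsilon$, I would contract each of the $[\Lambda:N_k]$ components of $\tilde T_B$ (viewed as an equivalence relation on $V(G_k)$) to a single vertex, calling the resulting sequence $\Hg'_\epsilon$; its vertex set identifies naturally with $V(H_k)$ via each component's $C_1$-representative, and bi-Lipschitz equivalence $\Hg'_\epsilon \simeq \square_{N_k}\Lambda$ follows from the previous paragraph's isometries, once one notes that the weighted-distance formula for contracted graphs only inflates distances by a factor at most $d$ compared to the original $\G_\epsilon$. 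A bookkeeping gives $|E(H'_{\epsilon,k})| = |E(G_{\epsilon,k})| - \beta_k$, where $\beta_k$ counts the edges of $G_{\epsilon,k}$ with both endpoints in the same $\tilde T_B$-component. The target bound $e(\Hg'_\epsilon) - 1 \le d(c(\G) - 1) + d\epsilon$ is then equivalent to $\beta_k \ge (d-1)[\Lambda:N_k]$ for all large $k$.

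The main obstacle is securing this last inequality, which need not hold for an arbitrary $\G_\epsilon$. However, since each $\tilde T_B$-component has $G_k$-diameter $d-1$, the bi-Lipschitz hypothesis forces its $G_{\epsilon,k}$-diameter to be at most $L(d-1)$, so every pair of vertices in the same transversal is connected in $G_{\epsilon,k}$ by a path of bounded length. I would resolve the obstacle by choosing the contracting forest adaptively: instead of using the rigid lift $\tilde T_B$, construct a different spanning forest $F_k \subset G_{\epsilon,k}$ whose $[\Lambda:N_k]$ tree components are still rainbow transversals (one vertex per coset), obtained by a greedy or matching argument that realizes each $\tilde T_B$-edge by a short vertex-disjoint $G_{\epsilon,k}$-path. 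Each of the $[\Lambda:N_k]$ components of $F_k$ then contributes at least $d-1$ edges to the count replacing $\beta_k$, and the desired bound follows. Combining both directions yields the claimed equality.
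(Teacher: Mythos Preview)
Your first inequality $d(c(\square_{N_k}\Gamma)-1)\le c(\square_{N_k}\Lambda)-1$ is sound: placing a near-optimal $\Hg_\epsilon$ on the base coset and suspending the lifted tree $\tilde T_B$ does produce a graph sequence bi-Lipschitz to $\square_{N_k}\Gamma$, and the edge count works out. The uniform bi-Lipschitz bound you need between the $G_k$-metric restricted to $C_1$ and the intrinsic $Cay(\Lambda/N_k,S')$-metric is exactly the content of the paper's ``trapezoid'' argument (Figure~\ref{trapa}), so that step is fine once made precise.

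The second inequality, however, has a genuine gap. Your contraction yields $|E(H'_{\epsilon,k})|=|E(G_{\epsilon,k})|-\beta_k$, and you correctly observe that $\beta_k\ge (d-1)[\Lambda:N_k]$ need not hold for the rigid lift $\tilde T_B$. The proposed fix --- finding a spanning forest $F_k\subset G_{\epsilon,k}$ whose $[\Lambda:N_k]$ components are rainbow transversals --- is not justified and in general fails. Already for $d=2$ this asks for a perfect matching in $G_{\epsilon,k}$ between the two cosets $C_1$ and $C_2$; but bi-Lipschitz equivalence does not prevent some vertex $v\in C_1$ from having \emph{all} its $G_{\epsilon,k}$-neighbours inside $C_1$, which blocks any such matching. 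Realising $\tilde T_B$-edges by ``short vertex-disjoint $G_{\epsilon,k}$-paths'' does not help either: intermediate vertices on those paths lie in some coset, so the resulting components are no longer rainbow on $d$ vertices, and the edge count $(d-1)[\Lambda:N_k]$ is lost.

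The paper sidesteps this difficulty entirely. Rather than starting from an arbitrary near-optimiser $\G_\epsilon$ and trying to push it down to $\Lambda$, it first proves (via the trapezoid and the arc-deletion procedure of Figure~\ref{gedges}) that $\square_{N_k}\Gamma$ is bi-Lipschitz equivalent to a single explicit sequence $\G'$: a copy of $\square_{N_k}\Lambda$ with bounded trees of total size $(d-1)[\Lambda:N_k]$ hanging off it. Then Lemma~\ref{lemma1} is applied (iteratively, peeling off terminal vertices) to conclude that every near-optimiser for $\G'$ can be taken to modify only the $\square_{N_k}\Lambda$-part. This turns the cost computation into a one-line calculation and delivers both inequalities at once. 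The missing idea in your approach is precisely this use of Lemma~\ref{lemma1}: it is what replaces the impossible ``adaptive forest'' construction.
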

\begin{proof}
Since $\Gamma$ is finitely generated there exist elements such that $\Lambda=\langle h_1, \dots,  h_n\rangle$ and $\Gamma=\langle h_1,\dots , h_n, g_1, \dots ,g_r\rangle $. Now since $\Lambda\triangleleft \Gamma$ one sees that $\Gamma/\Lambda = \langle \overline{g}_1, \dots , \overline{g}_r\rangle$. Moreover $\forall k>0$ we can interpret $\Gamma/N_k$ as consisting of $[\Gamma:\Lambda]$ copies of $\Lambda/N_k$ wich are linked together by edges that originate from the generators $\{g_i\}_{i=1\dots r}$. We will call the graph $\Lambda/N_k$ the base copy in $\Gamma/N_k$. Consider two vertices in $\Gamma/N_k$, say $\overline{\omega}$ and $\overline{\omega h}$, where $\omega\in \Gamma$ and $h$ is one of the generators of $\Lambda$. Because $\Gamma/\Lambda$ is a group, there exists a path in $g$-labelled edges of length at most $[\Gamma:\Lambda]$ (say $\Pi_{i=1}^s g_i$) such that $\omega(\Pi_{i=1}^s g_i)$ is in $\Lambda$ (so consequently $\overline{\omega}(\overline{\Pi_{i=1}^s g_i})$ is in $\Lambda/N_k$). Similarly we find such a path $(\Pi_{i=1}^t g_i)$ for $\omega h$. Now clearly since $\omega(\Pi_{i=1}^s g_i)$ and $\omega h(\Pi_{i=1}^t g_i)$ are in $\Lambda$ we can find a path between them in $Cay(\Lambda,\{h_1\dots h_n\})$ of the form $\Pi_{i=1}^R h_i$. Note that considering this path in $\Lambda/N_k$ it can only become shorter (see Figure \ref{trapa}).
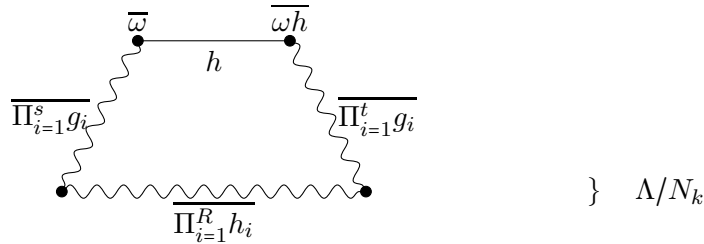
\begin{figure}[H]
	\centering
	\begin{tikzpicture}
	\draw[fill=black] (1,0) circle (2pt) node[align=left,  above] {$\overline{\omega}$}
	(3,0) circle (2pt) node[align=center,   above] {$\overline{\omega h}$}
	(0,-2) circle (2pt)
    (4,-2) circle (2pt);
	\draw   (1,0) -- (3,0) node[midway,  below] {$h$};;
	\draw[style={decorate, decoration=snake}] (1,0)--(0,-2)node[midway,  left] {$\overline{\Pi_{i=1}^s g_i}$};
	\draw[style={decorate, decoration=snake}] (3,0)--(4,-2) node[midway,  right] {$\overline{\Pi_{i=1}^t g_i}$};
    \draw[style={decorate, decoration=snake}] (0,-2)--(4,-2) node[midway,  below] {$\overline{\Pi_{i=1}^R h_i}$};
    \draw (7,-2) node{$\}$};
	\draw (8,-2) node{$\Lambda/N_k$};
	\end{tikzpicture}
	\caption[superthing]
	{Sketch of what happens in $\Gamma/N_k$.}
	\label{trapa}
\end{figure}
Next we take another edge in $\Gamma/N_k$ labelled $h$. Suppose, when repeating the process above, the paths in $g$-generators are the same then we can translate the `trapezoid' in Figure \ref{trapa} and obtain a path of length $\le R$ in the base copy $\Lambda/N_k$. Suppose the paths are not the same, then we continue the process above independently from what we did before and find some new path of some maximal length $R'$. We repeat this process. However note that we only have finitely many ways to go from any point in $\Gamma/N_k$ to the base copy in a $g$-path of length bounded by $[\Gamma:\Lambda]$, and there are only finitely many possibilities for $h$. This means that we can find some superconstant $R_{max}$ by taking the maximum over all found $R$, which bounds the length of all these paths in $\Lambda/N_k$, this for all $k>0$.

This argument shows that removing, in $\square_{N_k} \Gamma$, all edges labelled with an $h$-generator that are not in the base copy $\Lambda/N_k$ gives a graph sequence that is $(R_{max}+2[\Gamma:\Lambda])$-bi-Lipschitz equivalent to $\square_{N_k} \Gamma$.

Now we construct another graph sequence $\G'$ by also deleting $g$-labelled edges. We do this by constructing tree-like structures suspended from the base copy where we make sure that any point not in the base copy is the same distance from the base-copy as before. Clearly any point is $[\Gamma:\Lambda]$-close to $\Lambda/N_k$. 

Now how do we delete $g$-edges? Consider the set of all arcs on $\Lambda/N_k$. These are paths in $\Gamma/N_k$ that have start and end in the base copy, only use $g$-edges and do not pass through the points of the base copy in between. Take then a shortest arc in this set. Suppose this contains an odd number of edges (say $2n+1$). We delete the middle edge (say $(x,y)$). Suppose by doing this another point $z$ becomes further away from the base copy than before. Clearly the shortest path must contain the deleted edge. Hence there exists a path from either $x$ or $y$ to the base copy of length less than or equal to $n-2$. As can be seen in Figure \ref{gedges} this induces an arc of length strictly less than $2n-1$, which is a contradiction to the minimality of the chosen arc. 

A similar argument exists for even length paths. Here we will delete one of the two middle edges. Now we repeat the procedure untill our set of arcs is empty.  Since we showed that this procedure does not change the distance of any vertex to the base copy and every vertex is $[\Gamma:\Lambda]$-close to the base copy, we obtain a graph sequence that is equivalent to the original one. In particular it is of the form $\square_{N_k} \Lambda$ with trees of bounded length suspended from it.

\begin{figure}[H]
	\centering
	\begin{tikzpicture}
	\draw[fill=black] (1,0) circle (2pt) node[left]{$x$}
	(3,0) circle (2pt) node[right]{$y$}
	(0,-2) circle (2pt)
	(4,-2) circle (2pt)
	(2.5,-2) circle (2pt)
	(0,1) circle (2pt) node[left]{$z$};
	\draw   (1,0) -- (3,0) ;
	\draw   (2,0) node[cross=4] {} ;
	\draw[style={decorate, decoration=snake}] (1,0)--(0,-2)node[midway,  left] {$n$};
	\draw[style={decorate, decoration=snake}] (3,0)--(4,-2)node[midway,  right] {$n$};
	\draw[style={decorate, decoration=snake}] (3,0)--(2.5,-2)node[midway,  left] {$<n-1$};
	\draw[style={decorate, decoration=snake}] (0,1)--(1,0)node[midway,  left] {$r$};
	\draw (7,-2) node{$\}$};
	\draw (8,-2) node{$\Lambda/N_k$};
	\end{tikzpicture}
	\caption[superthing]
	{Deleting $g$-edges.}
	\label{gedges}
\end{figure}
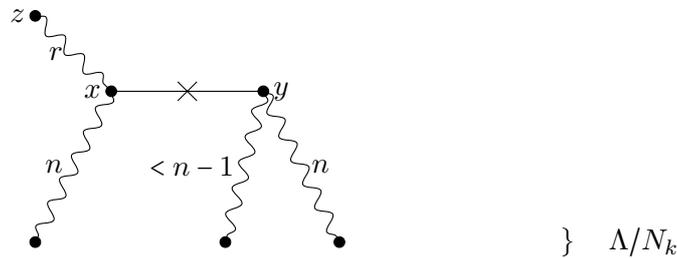

 By repeatedly (i.e. at most $[\Gamma:\Lambda]$ times) using Lemma \ref{lemma1} we see that minimising the edge number  (i.e. finding the cost)  of $\G'$ and thus $\square_{N_k} \Gamma$ is the same as minimising the edge number of $\square_{N_k} \Lambda$. We can take $\forall \epsilon > 0$ a graph sequence $\Hg_\epsilon$ such that $e(\Hg_\epsilon)\le c(\square_{N_k} \Lambda)+\epsilon$ and $\Hg_\epsilon\simeq \square_{N_k} \Lambda $. Constructing $\G_\epsilon$ by introducing $\Hg_\epsilon$ into $\G'$ the infimum of the edge-numbers will give the cost of $\G'$. So let us calculate this:
\begin{equation*}
\begin{aligned}
e(\mathcal{G}_\epsilon) &= \liminf_{n\rightarrow\infty} \frac{|E(G_{\epsilon,n})|}{|V(G_{\epsilon,n})|}\\
&= \liminf_{n\rightarrow\infty} \frac{|E(H_{\epsilon,n})| + ([\Gamma:\Lambda]-1) |V(H_{\epsilon,n})| }{ [\Gamma:\Lambda] |V(H_{\epsilon,n})|}\\
&= \frac{[\Gamma:\Lambda]-1}{[\Gamma:\Lambda]}  + \frac{e(\Hg_\epsilon)}{[\Gamma:\Lambda]}. \\
\end{aligned}
\end{equation*}

This implies that
\[ [\Gamma:\Lambda](e(\G_\epsilon)-1) = e(\Hg_\epsilon)-1 \]
and taking the infimum of both sides for $\epsilon$ we obtain the result
\[ [\Gamma:\Lambda](c(\square_{N_k} \Gamma)-1) = c(\square_{N_k} \Lambda)-1. \]

\end{proof}

\begin{theorem}\label{intermediatemultcost}
Take $\Gamma$ a residually finite group and $\Lambda\le \Gamma$ with $[\Gamma:\Lambda]< \infty$. Consider a filtration $(N_k)_{k\in\mathbb{N}} $ of $\Gamma$ that is contained in $\Lambda$, then
$$ [\Gamma:\Lambda] (c(\square_{N_k} \Gamma)-1) =  c(\square_{N_k} \Lambda)-1.$$ 
\end{theorem}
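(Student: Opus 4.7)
The plan is to follow the proof of Theorem~\ref{multcost} step by step, isolating the two places where normality of $\Lambda$ was used and replacing them with arguments valid for an arbitrary finite-index subgroup. Pick generators $\{h_1,\dots,h_n\}$ of $\Lambda$ and a right transversal $T=\{t_1=1,t_2,\dots,t_m\}$ for $\Lambda$ in $\Gamma$ with $t_i\notin\Lambda$ for $i\ge 2$; take $S=\{h_j\}\cup\{t_2,\dots,t_m\}$ as a generating set of $\Gamma$. With this choice $\square_{N_k}\Gamma=\{Cay(\Gamma/N_k,\overline S)\}_k$, the vertex set of each $Cay(\Gamma/N_k,\overline S)$ partitions into the $m$ cosets $\Lambda t_i/N_k$, and the base chunk ($i=1$) is literally $Cay(\Lambda/N_k,\{h_j\})=\square_{N_k}\Lambda$. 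The only new phenomenon compared to the normal case is that an $h$-edge from $\lambda t_iN_k$ might stay inside chunk $i$ (exactly when $t_iht_i^{-1}\in\Lambda$) or jump to another chunk, and the same dichotomy applies to $t_j$-edges.

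The first main step is to delete every "internal" edge of every non-base chunk. Suppose we wish to delete an $h$-edge from $\lambda t_iN_k$ to $\lambda t_ihN_k$, so by assumption $h':=t_iht_i^{-1}\in\Lambda$. The trapezoid replacement reads
\[
\lambda t_i\xrightarrow{\,t_i^{-1}\,}\lambda\xrightarrow{\;h'\;}\lambda h'\xrightarrow{\;t_i\;}\lambda h't_i=\lambda t_ih,
\]
where the middle arrow stands for a geodesic in $Cay(\Lambda/N_k,\{h_j\})$ spelling $h'$ as a word in the $h_j$'s. Since the set $\{t_iht_i^{-1}:(t_i,h)\in T\times\{h_j\},\,t_iht_i^{-1}\in\Lambda\}$ is finite and independent of $k$, the $\{h_j\}$-word length of each such $h'$ is bounded by some constant $L_0$, hence the replacement path has length at most $L_0+2$. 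An identical recipe handles the finitely many types of internal $t_j$-edges, with the conjugate $t_it_jt_i^{-1}\in\Lambda$ playing the role of $h'$. Thus deleting all internal non-base edges yields a graph sequence bi-Lipschitz equivalent to $\square_{N_k}\Gamma$ with a uniform constant.

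The second main step is to prune the remaining inter-chunk edges into trees suspended from the base. The arc-shortening procedure in the proof of Theorem~\ref{multcost} applies verbatim: normality was not actually used there, only that every vertex is at distance at most $[\Gamma:\Lambda]$ from the base chunk, which is still true here since $\lambda t_i$ is connected to $\lambda$ by $|t_i^{-1}|_S$ edges. After this step we obtain a graph sequence $\G'\simeq\square_{N_k}\Gamma$ consisting of the base copy $\square_{N_k}\Lambda$ with $[\Gamma:\Lambda]-1$ extra vertices, in the form of bounded-depth rooted trees, dangling from each base vertex.

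Finally, I would conclude exactly as in the normal case: invoke Lemma~\ref{lemma1} to pick, for every $\epsilon>0$, a graph sequence $\Hg_\epsilon\simeq\square_{N_k}\Lambda$ with $e(\Hg_\epsilon)\le c(\square_{N_k}\Lambda)+\epsilon$, substitute $\Hg_\epsilon$ for the base copy in $\G'$ to obtain $\G_\epsilon\simeq\square_{N_k}\Gamma$, and compute
\[
e(\G_\epsilon)=\frac{e(\Hg_\epsilon)}{[\Gamma:\Lambda]}+\frac{[\Gamma:\Lambda]-1}{[\Gamma:\Lambda]},
\]
so $[\Gamma:\Lambda](e(\G_\epsilon)-1)=e(\Hg_\epsilon)-1$; taking infima gives the claimed equality. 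The main obstacle is the bi-Lipschitz estimate for the internal-edge deletion: the constant $L_0$ must be uniform in $k$, and the key input securing this is the finiteness of the conjugate set above, which guarantees fixed finite $\{h_j\}$-word lengths.
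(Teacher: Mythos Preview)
Your argument is essentially correct, but the paper takes a much shorter route that you overlooked. Instead of redoing the geometric proof of Theorem~\ref{multcost} and patching the two spots where normality was used, the paper simply applies Theorem~\ref{multcost} twice: once to the pair $(\Gamma,N_1)$ and once to the pair $(\Lambda,N_1)$. Since $(N_k)$ is a filtration of $\Gamma$, the subgroup $N_1$ is normal in $\Gamma$, hence also normal in $\Lambda$; and $(N_k)$ is a filtration of $N_1$ contained in $N_1$. So both applications of Theorem~\ref{multcost} are legitimate and give
\[
[\Gamma:N_1]\bigl(c(\square_{N_k}\Gamma)-1\bigr)=c(\square_{N_k}N_1)-1=[\Lambda:N_1]\bigl(c(\square_{N_k}\Lambda)-1\bigr),
\]
whence the result by dividing through by $[\Lambda:N_1]$ and using $[\Gamma:N_1]=[\Gamma:\Lambda][\Lambda:N_1]$.

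What your approach buys is a direct argument that does not require the presence of an auxiliary normal subgroup sitting below $\Lambda$; in effect you are reproving Theorem~\ref{multcost} in the generality the paper later needs for the Farber case (where it remarks that the proof ``does not need that $\Gamma/\Lambda$ is a group''). What the paper's approach buys is brevity: in the filtration setting such a normal subgroup is handed to you for free, so there is no need to revisit the trapezoid and arc-shortening constructions. One small point worth tightening in your write-up: as written you only explicitly produce $\G_\epsilon$ from $\Hg_\epsilon$, which gives one inequality; the reverse inequality also requires Lemma~\ref{lemma1} (applied iteratively, once per layer of the suspended trees) to show that any near-minimiser for $\G'$ can be assumed to modify only the base copy, and hence restricts to a near-minimiser for $\square_{N_k}\Lambda$.
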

\begin{proof}
Apply Theorem \ref{multcost} first for $\Gamma$ and $N_1$ and secondly for $\Lambda$ and $N_1$. The result follows.
\end{proof}

\begin{theorem}
	Take $\Gamma$ a residually finite group and $\Lambda\le \Gamma$ with $[\Gamma:\Lambda]< \infty$. Consider a Farber sequence $(N_k)_{k\in\mathbb{N}} $ of $\Gamma$ that is contained in $\Lambda$. Define $\G = \{Sch(\Gamma,N_k,S)\}_{k\in\mathbb{N}}$ and $\Hg= \{Sch(\Lambda,N_k,S')\}_{k\in\mathbb{N}} $ then
	$$ [\Gamma:\Lambda] (c(\G)-1) =  c(\Hg)-1.$$ 
\end{theorem}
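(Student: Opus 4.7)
The plan is to adapt the proof of Theorem \ref{multcost} to the present, more general setting, where $\Lambda$ need not be normal in $\Gamma$ and $(N_k)$ is only a Farber sequence rather than a filtration. The algebraic manipulations in the quotient group $\Gamma/N_k$ are replaced throughout by Schreier-graph manipulations using the $\Gamma$-action on $N_k\backslash\Gamma$. By the coarse invariance of Schreier graphs under change of generating set, I may assume without loss of generality that $S'\subseteq S$. Then $Sch(\Gamma,N_k,S)$ partitions into $[\Gamma:\Lambda]$ blocks $B_g^k = \{N_k\lambda g : \lambda\in\Lambda\}$ indexed by right $\Lambda$-cosets, each of size $[\Lambda:N_k]$. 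The base block $B_e^k$ is in canonical bijection with the vertex set of $Sch(\Lambda,N_k,S')$, and its induced $S'$-labelled subgraph coincides with $Sch(\Lambda,N_k,S')$ exactly.

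Next I would transfer the trapezoid argument of Theorem \ref{multcost}. Because $[\Gamma:\Lambda]<\infty$, every vertex of $Sch(\Gamma,N_k,S)$ lies within $S$-distance $[\Gamma:\Lambda]$ of the base block, since the $\Gamma$-generators act transitively on $\Lambda\backslash\Gamma$. For each $h\in S'$, any $h$-labelled edge crossing between blocks can be replaced by a detour that travels first to the base copy via a short path, then runs through $Sch(\Lambda,N_k,S')$, then returns along an analogous short path. Finiteness of $[\Gamma:\Lambda]$ and $|S|$ guarantees that only finitely many detour types arise, so one obtains a uniform length bound $R_{\max}$ independent of $k$. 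Removing the crossing edges and inserting these detours produces a graph sequence bi-Lipschitz equivalent to $\G$. The $g$-edge deletion procedure from Theorem \ref{multcost} then trims each non-base block down to a tree of depth at most $[\Gamma:\Lambda]$ suspended from the base copy.

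Then I would apply Lemma \ref{lemma1} to replace the base copy with an $\epsilon$-optimal graph sequence $\Hg_\epsilon\simeq\Hg$ satisfying $e(\Hg_\epsilon)\le c(\Hg)+\epsilon$, producing $\G_\epsilon\simeq\G$ with
\[
e(\G_\epsilon) \;=\; \frac{[\Gamma:\Lambda]-1}{[\Gamma:\Lambda]} + \frac{e(\Hg_\epsilon)}{[\Gamma:\Lambda]},
\]
exactly as in the computation at the end of Theorem \ref{multcost}. Letting $\epsilon\to 0$ then yields the desired identity $[\Gamma:\Lambda](c(\G)-1) = c(\Hg)-1$.

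The chief obstacle is the non-uniformity of the blocks when $\Lambda$ is not normal: conjugation by a coset representative $g$ need not preserve $\Lambda$, so the same generator $s\in S$ can behave differently in different blocks (an $s$-edge may cross between blocks when issued from one representative and stay within a block when issued from another). A case analysis indexed by (coset representative, generator) pairs is required, tractable only because both sets are finite, and it is what ultimately produces the bound $R_{\max}$. The Farber hypothesis itself plays only an auxiliary role here, guaranteeing that $[\Lambda:N_k]\to\infty$ so that $\Hg$ is an honest graph sequence and that $\G$ is a bounded-degree graph sequence in the paper's sense; Benjamini-Schramm convergence is not otherwise invoked in the geometric reduction.
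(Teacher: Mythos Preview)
Your approach is correct and takes a genuinely different route from the paper. The paper's proof invokes the Farber hypothesis essentially: it uses Benjamini--Schramm convergence to argue that for large $k$ the Schreier graphs $Sch(\Gamma,N_k,S)$ locally resemble the Cayley-graph situation almost everywhere, declares the small exceptional set negligible for the cost computation, and then appeals to the techniques of the filtration case (Theorem~\ref{multcost}). You instead show that the trapezoid and arc-cutting arguments of Theorem~\ref{multcost} transfer directly to $Sch(\Gamma,N_k,S)$ via the block decomposition indexed by $\Lambda\backslash\Gamma$, so that Benjamini--Schramm convergence is never used in the geometric reduction. This actually buys you a stronger statement: your argument yields the identity for \emph{any} sequence $(N_k)$ of finite-index subgroups contained in $\Lambda$ with $[\Gamma:N_k]\to\infty$, not just Farber sequences. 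The trade-off is that the paper's proof is three sentences long (modulo the hand-waved ``ignore the small part'' step, which itself would require justification that cost is insensitive to modifying a vanishing proportion of vertices), while yours must redo the combinatorics of Theorem~\ref{multcost} in the Schreier setting.

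One small wrinkle in your write-up: the detour argument must treat \emph{all} $h$-labelled edges outside the base block, not only those crossing between blocks. In the non-normal case an $h$-edge can also be internal to a non-base block (when $\omega h\omega^{-1}\in\Lambda$ for some $\omega\notin\Lambda$), and these too must be removed before the arc-cutting step can produce genuine trees suspended from the base copy. The same trapezoid detour handles them, so this does not affect the validity of your plan.
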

\begin{proof}
Since the $N_k$ form a Farber sequence, the induced graph sequence Benjamini-Schramm converges to $Cay(\Gamma)$ and $Cay(\Lambda)$. Hence for large enough $k$, the Schreier graphs behave locally in the same way as a filtration, almost everywhere. Hence we can ignore the small part on which the behaviour deviates and apply the same techniques as in Theorem \ref{multcost} on the remaining large part of the graphs. We need to remark that we are in the more general case where $\Lambda$ is not necessarily normal. However the reader can verify that the proof of Theorem \ref{multcost} does not need that $\Gamma/\Lambda$ is a group, but that it is sufficient that $Sch(\Gamma, \Lambda, S)$ is connected and that $Cay(\Gamma, S)$ is transitive, which is the case. One obtains the same result.
\end{proof}

\section{Property A implies hyperfiniteness}\label{sectionA}

We show that property A implies hyperfiniteness for graph sequences. We adopt the following definition for property A and steal Lemma \ref{stolenwil} and Lemma \ref{stolenwilpr} from Rufus Willett \cite{willett2011property}.

\begin{definition}\label{defA}
Let $\G$ be a graph sequence. Then $\G$ has property A if for all $\epsilon>0$, there exists $S>0$ and a function $\xi: \G\rightarrow l^1(\G)$, denoted $x\mapsto \xi_x$, such that:
\begin{enumerate}
\item $\lVert \xi_x\rVert_1 =1 $ for all $x\in \G$;
\item $ \xi_x$ is supported in the ball of radius $S$ about $x$ for all $x\in \G$; 
\item $\lVert \xi_x-\xi_y \rVert_1 <\epsilon  $ for all $(x,y)\in E(\G)$.
\end{enumerate}
\end{definition}

\begin{lemma}\cite[Lemma 3.2]{willett2011property}\label{stolenwil}
	Let $G$ be a graph with all vertices of degree at most some constant $D$, and let $\epsilon>0$. Say $\phi\in l^1(G)$ is a finitely supported function of norm one such that 
	\[ \sum_{(x,y)\in E(G)} \lvert \phi(x)-\phi(y)\rvert < \epsilon. \]
	Then there exists a subset $F$ of the support of $\phi$ such that $\lvert \partial F\rvert< \epsilon \lvert F\rvert$.
\end{lemma}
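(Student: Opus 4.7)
The plan is to apply the discrete coarea (layer-cake) formula to reduce the statement about a general $l^1$-function to a statement about super-level sets. First I would reduce to the case $\phi \ge 0$: replacing $\phi$ by $|\phi|$ does not change the support or the $l^1$-norm, and by the triangle inequality $\bigl||\phi|(x)-|\phi|(y)\bigr| \le |\phi(x)-\phi(y)|$, so the hypothesis is preserved.

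Next I would introduce the super-level sets $F_t := \{x \in G : \phi(x) > t\}$ for $t \ge 0$, each of which is a subset of the support of $\phi$. The key observation is the pair of identities
\[
\|\phi\|_1 \;=\; \int_0^{\infty} |F_t|\, dt \;=\; 1,
\qquad
\sum_{(x,y)\in E(G)} |\phi(x)-\phi(y)| \;=\; \int_0^{\infty} |\partial F_t|\, dt.
\]
The first follows from $\phi(x)=\int_0^\infty \mathbf{1}_{\phi(x)>t}\,dt$ and summing over $x$. For the second, fix an edge $(x,y)$ and assume $\phi(y) \le \phi(x)$; then $(x,y) \in \partial F_t$ precisely when $\phi(y) \le t < \phi(x)$, so integrating $\mathbf{1}_{\{(x,y)\in\partial F_t\}}$ over $t$ gives $|\phi(x)-\phi(y)|$, and swapping the order of summation and integration yields the claim.

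With these identities, the proof is a clean averaging argument. Suppose for contradiction that every super-level set $F_t$ (with $|F_t|>0$) satisfies $|\partial F_t| \ge \epsilon |F_t|$. Integrating over $t \in [0,\infty)$ gives
\[
\sum_{(x,y)\in E(G)} |\phi(x)-\phi(y)| \;=\; \int_0^{\infty} |\partial F_t|\, dt \;\ge\; \epsilon \int_0^{\infty} |F_t|\, dt \;=\; \epsilon \|\phi\|_1 \;=\; \epsilon,
\]
contradicting the hypothesis. Hence there exists some $t_0 \ge 0$ for which $F := F_{t_0}$ is nonempty and satisfies $|\partial F| < \epsilon |F|$, and by construction $F \subseteq \operatorname{supp}(\phi)$.

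No step is really an obstacle here; the only thing worth being careful about is the bookkeeping in the second coarea identity (making sure each edge of $\partial F_t$ is counted once with the correct multiplicity). The degree bound $D$ is not actually needed for the argument itself, but will be used elsewhere in deducing hyperfiniteness from the existence of such an $F$ for each vertex. Note also that this argument does not use any property specific to graph sequences, only that $G$ is a graph.
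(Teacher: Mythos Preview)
Your argument is correct and is exactly the standard layer-cake (coarea) proof of this lemma; in fact the paper does not give its own proof at all but simply cites Willett \cite{willett2011property} and leaves the proof environment empty, so there is nothing to compare against beyond noting that your approach matches the one in the cited reference.
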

\begin{proof}
\end{proof}

\begin{lemma}\label{stolenwilpr}
If a graph sequence $\G$ has property A, then for every $\epsilon>0$ there exist $N,S \in\mathbb{N}$ such that for $n\ge N$, there exist $F_n\subset G_n$ and $x_n\in G_n$ such that $F_n\subset B(x_n,S)$ and $\lvert\partial F_n\rvert < \epsilon \lvert F_n\rvert$. \label{lemA}
\end{lemma}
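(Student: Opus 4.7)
The plan is to deduce the lemma from Lemma \ref{stolenwil} by producing, for each $n$, a single function $\phi \in l^1(V(G_n))$ of norm one, supported in a ball of radius $S$, and with small total edge variation. The key trick is to transpose the kernel provided by property A.

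Fix $\epsilon > 0$ and let $D$ be the uniform degree bound of $\G$. Apply Definition \ref{defA} with parameter $\epsilon_0 := 2\epsilon/D$ to obtain $S \in \mathbb{N}$ and $\xi: \G \to l^1(\G)$ with the three listed properties. For each $n$ and each $x_0 \in V(G_n)$, define the transposed function $\phi_{x_0} \in l^1(V(G_n))$ by
\[
\phi_{x_0}(y) := \xi_y(x_0).
\]
Since $\xi_y$ is supported in $B(y, S)$, and $x_0 \in B(y, S)$ is equivalent to $y \in B(x_0, S)$, the function $\phi_{x_0}$ is automatically supported in $B(x_0, S)$.

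Swapping the order of summation (Fubini) and using $\|\xi_y\|_1 = 1$ together with $\|\xi_y - \xi_z\|_1 < \epsilon_0$ for every edge $(y,z)$ yields
\[
\sum_{x_0 \in V(G_n)} \|\phi_{x_0}\|_1 \;=\; \sum_{y \in V(G_n)} \|\xi_y\|_1 \;=\; |V(G_n)|,
\]
\[
\sum_{x_0 \in V(G_n)} \sum_{(y,z) \in E(G_n)} |\phi_{x_0}(y) - \phi_{x_0}(z)| \;=\; \sum_{(y,z) \in E(G_n)} \|\xi_y - \xi_z\|_1 \;\leq\; \epsilon_0 |E(G_n)|.
\]
Since $|E(G_n)| \leq D |V(G_n)|/2$, the ratio of the second total to the first is at most $\epsilon_0 D/2 = \epsilon$. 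A standard averaging argument (if every ratio exceeded $\epsilon$ the sums would contradict this bound) produces some $x_n \in V(G_n)$ with $\|\phi_{x_n}\|_1 > 0$ and
\[
\sum_{(y,z) \in E(G_n)} |\phi_{x_n}(y) - \phi_{x_n}(z)| \;\leq\; \epsilon \|\phi_{x_n}\|_1.
\]
Normalising to $\tilde{\phi} := \phi_{x_n}/\|\phi_{x_n}\|_1$ and applying Lemma \ref{stolenwil} with parameter $\epsilon$ yields a subset $F_n$ of $\mathrm{supp}(\tilde{\phi}) \subset B(x_n, S)$ with $|\partial F_n| < \epsilon |F_n|$. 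The construction works for every $n$, so $N = 1$ suffices.

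The main obstacle is the mismatch between property A, which controls $\|\xi_x - \xi_y\|_1$ as the base point varies along an edge, and Lemma \ref{stolenwil}, which requires a single $l^1$ function whose total edge variation is controlled. The transposition $\phi_{x_0}(y) = \xi_y(x_0)$ followed by Fubini is what converts one type of control into the other; the pigeonhole step then locates a suitable centre $x_n$ that serves as the base for the F\o lner-type set $F_n$.
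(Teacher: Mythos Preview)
Your argument is correct and is essentially the paper's own proof: transpose the kernel via $\phi_{x_0}(y)=\xi_y(x_0)$ (the paper writes $\Psi^n(x)=\xi^n_x(z_0)$), apply Fubini and pigeonhole to locate a good centre, then invoke Lemma~\ref{stolenwil}. The only cosmetic difference is that the paper retains an $N$ so that the restriction $\xi^n$ of $\xi$ to $G_n$ still has full mass (this depends on how the coarse-disjoint-union metric on $\G$ is set up), whereas you assert $N=1$; either way the lemma as stated is obtained.
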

The following is taken entirely from \cite{willett2011property} where it is part of a larger proof. We reproduce this to improve readability.
\begin{proof}
We fix $\epsilon>0$ and obtain for $\epsilon/d$ (where $d$ is the degree bounding $\G$) a function $\xi$ and constant $S$ as in \ref{defA}. By possibly taking absolute values, we may assume $\xi_x(y)\ge 0$ for all $x,y\in \G$. For large $n$ (i.e. $n$ greater than or equal to some $N$) one can restrict $\xi$ to $G_n$ and obtain $\xi^n: G_n\rightarrow l^1(G_n)$. We have the following inequality:
\begin{equation}\label{supereq}
\sum_{(x,y)\in E(G_n)} \lVert \xi_x-\xi_y \rVert_1 < d \lvert G_n\rvert \epsilon / d.
\end{equation}
On the other hand we have 
\begin{equation*}
\lvert G_n\rvert = \sum_{x\in G_n} \lVert \xi_x^{n}\rVert_1 = \sum_{x\in G_n}\sum_{z\in G_n} \lvert \xi_x^n (z)\rvert
\end{equation*}
and introducing this into $(\ref{supereq})$ gives
\begin{equation*}
\sum_{z\in G_n}\sum_{(x,y)\in E(G_n)} \lvert \xi^n_x(z)-\xi^n_y(z) \rvert < \sum_{z\in G_n}\epsilon \sum_{x\in G_n} \lvert \xi_x^n (z)\rvert.
\end{equation*}
So there must exist some $z_0\in G_n$ such that 
\begin{equation*}
\sum_{(x,y)\in E(G_n)} \lvert \xi^n_x(z_0)-\xi^n_y(z_0) \rvert < \epsilon \sum_{x\in G_n} \lvert \xi_x^n (z_0)\rvert.
\end{equation*}
Hence if we define $\Psi^n(x) = \xi^n_x (z_0) $, we see that $\Psi^n/\lVert\Psi^n\rVert_1$ has the necessary properties to apply Lemma \ref{stolenwil}.
\end{proof}

Repeating the previous lemma several times we obtain the following theorem.

\begin{theorem}\label{Aimplieshyperfinite}
A graph sequence $\G$ that has property A is also hyperfinite. 
\end{theorem}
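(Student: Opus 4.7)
The plan is to prove hyperfiniteness by iteratively applying Lemma \ref{stolenwilpr} to peel off bounded F\o{}lner-like subsets of $V(G_n)$ until only a small remainder is left; the union of the boundaries of these peelings, together with all edges incident to the leftover vertices, then serves as the witnessing cut set.

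Concretely, fix $\epsilon>0$ and let $d$ be the uniform degree bound of $\G$. Set $\epsilon':=\epsilon/(2d+2)$ and apply Lemma \ref{stolenwilpr} with parameter $\epsilon'$ to get constants $N,S\in\mathbb{N}$; let $K:=\sum_{i=0}^{S}d^i$, a uniform bound on the size of any $S$-ball and hence on the size of any set produced by the lemma. For each $n\ge N$ I would greedily construct a disjoint family $F_n^{(1)},\dots,F_n^{(m_n)}\subseteq V(G_n)$ of sets of size at most $K$ with $|\partial_{G_n}F_n^{(i)}|<\epsilon'|F_n^{(i)}|$. Writing $W_n^{(i)}:=V(G_n)\setminus\bigcup_{j\le i}F_n^{(j)}$, the rule is: while $|W_n^{(i)}|>\epsilon'|V(G_n)|$, produce the next set $F_n^{(i+1)}\subseteq W_n^{(i)}$ by rerunning the averaging step inside the proof of Lemma \ref{stolenwilpr}, but restricting the outer sum over $z$ to the $S$-interior of $W_n^{(i)}$; otherwise stop and set $m_n:=i$.

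Once the iteration terminates, take
\[E_n^\epsilon:=\bigcup_{i=1}^{m_n}\partial_{G_n}F_n^{(i)}\ \cup\ \{e\in E(G_n):e\text{ is incident to a vertex of }W_n^{(m_n)}\}.\]
Every component of $G_n\setminus E_n^\epsilon$ that meets some $F_n^{(i)}$ is entirely contained in it, so has at most $K$ vertices; every vertex of $W_n^{(m_n)}$ becomes isolated. The estimate
\[|E_n^\epsilon|\le\sum_{i=1}^{m_n}|\partial F_n^{(i)}|+d\,|W_n^{(m_n)}|<\epsilon'\sum_{i=1}^{m_n}|F_n^{(i)}|+d\epsilon'|V(G_n)|\le(d+1)\epsilon'|V(G_n)|<\epsilon|V(G_n)|\]
then gives hyperfiniteness.

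The main obstacle is this iteration step: one has to justify that the averaging argument of Lemma \ref{stolenwilpr} still delivers a useful $z_0$ once the outer sum is restricted to the $S$-interior of the current remainder $W_n^{(i)}$. This is essentially a restriction-type inheritance of property A to large induced subgraphs; the inputs one expects to use are the density lower bound $|W_n^{(i)}|>\epsilon'|V(G_n)|$ together with the bounded degree $d$ and the fixed radius $S$, which should force the $S$-interior of $W_n^{(i)}$ to carry a positive fraction of the total $\xi$-mass $\sum_z\sum_x|\xi^n_x(z)|=|V(G_n)|$, and hence yield a $z_0$ whose associated F\o{}lner-like set lies in $W_n^{(i)}$. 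Once this restriction-type step is made rigorous, everything else is routine counting.
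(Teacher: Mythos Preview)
Your overall architecture---iteratively extracting bounded F\o lner-like pieces and cutting around them---is exactly the skeleton of the paper's proof. The divergence, and the real difficulty, is precisely at the step you flag as ``the main obstacle'': producing the next $F_n^{(i+1)}$ inside the current remainder $W_n^{(i)}$.

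Your proposed fix (restrict the averaging over $z$ to the $S$-interior of $W_n^{(i)}$ and argue that this interior carries a positive fraction of the $\xi$-mass) does not go through as stated, and there is a genuine circularity. From $|W_n^{(i)}|>\epsilon'|V(G_n)|$ and $|\partial W_n^{(i)}|<\epsilon'|V(G_n)|$ one only gets
\[
\bigl|\{z\in W_n^{(i)}:B(z,S)\subset W_n^{(i)}\}\bigr|\ \ge\ |W_n^{(i)}|-K\,|\partial W_n^{(i)}|\ >\ (1-K)\,\epsilon'|V(G_n)|,
\]
which is useless since the ball bound $K\ge 1$. To make the $S$-interior nonempty you would need $\epsilon'$ small relative to $K$; but $K$ depends on $S$, and $S$ was obtained from property~A \emph{with parameter $\epsilon'$}. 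Decoupling the F\o lner parameter from the stopping threshold does not help either: one is eventually forced to require $\delta\cdot K_{2S(\delta)}$ to be small, and nothing in property~A controls how fast $S(\delta)$ (hence $K_{2S(\delta)}$) grows as $\delta\to 0$.

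The paper sidesteps this entirely. Instead of restricting the averaging, it \emph{modifies} the property~A witness on the remainder. After removing $F_n$, for each deleted point $a$ one looks at $R_a=\{x:\xi_x(a)\neq 0\}$ inside $G_n\setminus F_n$, splits it into connected components, picks a representative $\mu_i^a$ in each, and transfers the mass $\xi_x(a)$ to $\xi_x(\mu_i^a)$. This redistribution preserves $\lVert\xi_x\rVert_1$ and---because adjacent $x,y$ with $\xi_x(a),\xi_y(a)\neq 0$ lie in the same component and hence send their mass to the \emph{same} $\mu_i^a$---does not increase $\lVert\xi_x-\xi_y\rVert_1$. A second pass of the same trick bounds the support radius by $\max_y|B(y,S)|$, a constant independent of the iteration. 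Thus the remainder again satisfies the hypotheses of Lemma~\ref{stolenwilpr} with the \emph{same} $\epsilon$ and a fixed support bound, and the induction proceeds without any circularity.

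In short: your counting after the iteration is fine, but the ``restriction-type inheritance'' you hope for is the whole content of the theorem, and it is not obtainable from the density bound alone. The missing idea is the mass-redistribution of $\xi$ rather than a restriction of the sum.
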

\begin{proof}
Fix $\epsilon >0$. Since $\G$ has property A we find $\xi$ and $S$ from Definition \ref{defA}. Take $\G$ and apply Lemma  \ref{lemA}. For large $n$ we obtain sets $F_n\subset B(x_n,S)$ as in the lemma. Constructing $\G'$ by removing the vertices from the sets $F_n$ in $\G$ gives a graph sequence that still  has property A, as we show now. We construct $\xi':\G'\rightarrow l^1(\G')$ on each $G_n\backslash F_n$ by doing the following procedure $\lvert F_n \rvert$ times (one time for every $a\in F_n$).

Take the a first element $a\in F_n$. Consider the set $R_{a} = \{ x\in G_n\backslash F_n \; \lvert\; \xi_x(a)\ne 0  \}$. Note that $R_{a}$ can be decomposed into its connected components (i.e. $R_{a} = \sqcup_{i=1\dots k}A_i^{a}  $  ). Next one takes elements $\mu_1^{a}\dots \mu_k^{a}\in R_{a}$, one for each connected component $A_i^{a}$.
We define the map $\xi^1:G_n\backslash\{a\}\rightarrow l^1(G_n\backslash \{a\})$ by for each $x\in A_i^a$ setting $\xi_{x}^1 (\mu_i^a) = \lvert\xi_{x}(\mu_i^a)\rvert+\lvert\xi_{x}(a)\rvert$ and setting $\xi_{x}^1(y) = \xi_{x}(y)$ if $y\ne \mu_i^a$ for all $i$. For the other elements of $G_n\backslash\{a\}$ the map $\xi^1$ has the same image as $\xi$. Now we repeat the procedure for a second $\overline{a}\in F_n$, but now applied to $\xi^1$. If one continues this for all elements in $F_n$ we obtain some new $\xi'$ such that  $\lVert\xi'_{x}\rVert =1$ for all $x\in G_n\backslash F_n$. Define for $x$ the set $A_x = \{\mu_i^a\;\lvert \; x\in A_i^a \}$ .

 Now 
\begin{align}
\lVert \xi_x'\rVert_1 &= \sum_{z\in G_n\backslash F_n} \lvert \xi'_x(z)\rvert\nonumber\\
&= \sum_{z\in G_n\backslash (F_n\cup A_x)} \lvert \xi_x'(z) \rvert + \sum_{z\in A_x} \lvert \xi_x'(z)\rvert\nonumber\\
&= \sum_{z\in G_n\backslash (F_n\cup A_x)} \lvert \xi_x(z) \rvert + \sum_{\mu_i^a \in A_x} \lvert \xi_x(\mu_i^a) + \xi_x(a)\rvert\nonumber\\
&= \sum_{z\in G_n\backslash F_n} \lvert \xi_x(z) \rvert + \sum_{a\in F_n} \lvert \xi_x(a)\rvert\label{eqwilletta}\\
&= \lVert \xi_x\rVert_1,\nonumber
\end{align}
where equality (\ref{eqwilletta}) holds since any $x$ is at most in one connected component $A_i^a$ for each $a$.

Moreover if $(x,y)\in E(G_n\backslash F_n) $, then 
\begin{align}
\lVert \xi_x'-\xi_y'\rVert_1 &= \sum_{z\in G_n\backslash F_n} \lvert \xi'_x(z) -\xi'_y(z)\rvert \nonumber\\ 
&\le  \sum_{z\in G_n\backslash (F_n\cup A_x\cup A_y)} \lvert \xi_x(z) -\xi_y(z)\rvert + \sum_{z\in A_x\cup A_y} \lvert \xi_x'(z) -\xi_y'(z)\rvert \nonumber\\
&\le  \sum_{z\in G_n\backslash (F_n\cup A_x\cup A_y)} \lvert \xi_x(z) -\xi_y(z)\rvert\nonumber\\
& \; \;\;\;\;\; +  \sum_{z\in A_x\cup A_y} \lvert \xi_x(z) -\xi_y(z)  +\sum_{\mu_i^a\in A_x \cup A_y \lvert \mu_i^a = z} \xi_x(a) -\xi_y(a)  \rvert\label{eqwillett1} \\
&\le  \sum_{z\in G_n\backslash F_n} \lvert \xi_x(z) -\xi_y(z)\rvert\nonumber\\
&= \lVert \xi_x-\xi_y\rVert_1.\nonumber
\end{align}
One can see inequality (\ref{eqwillett1}) holds by the following argument: if $\xi_x(a)$ and $\xi_y(a)$ are both non-zero, then $x$ and $y$ are in the same connected component and thus our construction of $\xi'$ moves the values $\xi_x(a)$ and $\xi_y(a)$ from $a$ to the same point $\mu_i^a$. 

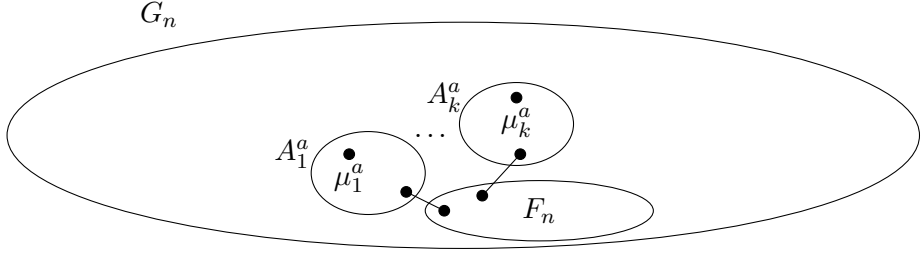
\begin{figure}[H]
	\centering
	\begin{tikzpicture}
	\draw (0,0) ellipse (6cm and 1.5cm);
	\draw (1,-1) ellipse (1.5cm and 0.4cm);
	\draw (1,-1) node{$F_n$};
	\draw (-4,1.6) node{$G_n$};
	\draw[fill=black] (-0.25,-1) circle (2pt); 
	%(3,0) circle (2pt) node[align=center,   above] {$\overline{\omega h}$}
	%(0,-2) circle (2pt)
	%(4,-2) circle (2pt);
	\draw   (-0.25,-1) -- (-0.75,-0.75);
	\draw[fill=black] (-0.75,-0.75) circle (2pt); 
	\draw (-1.25,-0.5) ellipse (0.75cm and 0.55cm);
	\draw[fill=black] (-1.5,-0.25) circle (2pt) node[align=center,   below]{$\mu_1^a$};
	\draw (-2.25,-0.25) node{$A_1^a$}; 
	\draw (0.7,0.15) ellipse (0.75cm and 0.55cm);
	\draw[fill=black] (0.25,-0.80) circle (2pt); 
	\draw   (0.25,-0.80) -- (0.75,-0.25);
	\draw[fill=black] (0.75,-0.25) circle (2pt); 
	\draw[fill=black] (0.7,0.5) circle (2pt) node[align=center,   below]{$\mu_k^a$};
	\draw  (-0.25,0.5) node{$A_k^a$};
	\draw  (-0.4,0) node{$\dots$};
	\end{tikzpicture}
	\caption[superthing]
	{Construction of  $R_a$.}
\end{figure}	

Now let us consider the support of an arbitrary $\xi_x'$. Of course it is possible that by removing the points of $F_n$ we either disconnect the graph or make points in $G_n\backslash F_n$ be much farther apart than was originally the case. For some $a\in G_n\backslash F_n$ define once again $R_{a} = \{ x\in G_n\backslash F_n \; \lvert\; \xi_x(a)\ne 0  \}$. We once again find the connected components and choose elements $\mu_i$ in these components. We define $\xi''$ in a similar way as $\xi'$ and can show that $\xi''$ still has the necessary properties. Moreover any $\xi_x''$ has support within $B(x,\max\limits_{y\in \G} \lvert B(y,S) \rvert)$. This is guaranteed by its construction. However this value is bounded, since the degree of the graph sequence is uniformly bounded.

So if $supp(\xi_x)\subset B(x,S)$, then $supp(\xi'_x)\subset B(x,\max\limits_{x\in \G} \lvert B(x,S) \rvert)$. Hence $\xi'$ has all properties needed in definition \ref{defA}. If we can once again apply Lemma \ref{lemA}, we obtain new $F'_n\subset B(x_n',\max\limits_{x\in \G} \lvert B(x,S) \rvert)\subset G_n'$. Next, one looks at $\G'' = \{G_n\backslash(F_n\cup F_n')\}_{n\in \mathbb{N}}$. Further repeating the procedure, we obtain, for $n\ge N$, a partition of the graphs $G_n$ with partitioning sets inside some ball of radius $\max\limits_{x\in \G} \lvert B(x,S) \rvert$. Note that the value $N$ stays unchanged every time the procedure is repeated, since this value only depends on $\xi$. So by construction we obtain partitions (the sets $F_n$, $F_n'$ etc.) of the graphs $G_n$ with $n\ge N$ which defines hyperfiniteness.
\end{proof}

\section{Elek's theorem for Farber sequences}\label{elekfarbersection}

Elek shows in \cite{elekcombcost} that amenability of a residually finite group is equivalent to hyperfiniteness of box spaces originating from filtrations of the group and equivalent to those box spaces having property A. It is believed that what holds for filtrations also holds for Farber sequences and their associated graph sequences of Schreier graphs. This motivates us to prove Elek's theorem but for Farber sequences.

A filtration approximates a given group by taking a sequence of normal subgroups with trivial intersection, more specifically it provides a Benjamini-Schramm convergent sequence of Cayley-graphs. For a Farber sequence we allow any finite index subgroup and ask that the associated graph sequence of Schreier graphs is Benjamini-Schramm convergent. We use this characterisation later on.

\begin{lemma}\label{lemmaamimpA}
If a finitely generated group $\Gamma$ is amenable, then any graph sequence consisting of Schreier graphs of this group has property A.
\end{lemma}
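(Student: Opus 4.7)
The plan is to transport a Følner set of $\Gamma$ onto each Schreier graph via the right action of $\Gamma$ on cosets, and to take the resulting normalised pushforward measures as the witnessing family of $\ell^1$-functions. This is the direct combinatorial analogue of the classical argument that amenable groups have property A as metric spaces.

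Concretely, given $\epsilon > 0$, I would first use amenability of $\Gamma$ to select a finite Følner set $F \subset \Gamma$ satisfying $|sF \triangle F| < \epsilon |F|$ for every $s \in S \cup S^{-1}$, where $S$ is the fixed generating set used to form the Schreier graphs in the sequence. Set $R := \max_{f \in F} |f|_S$, the word length with respect to $S$. For each vertex $x = \Lambda g$ of any Schreier graph $Sch(\Gamma,\Lambda,S)$ in the sequence, I define
\[ \xi_x(y) := \frac{|\{f \in F : x \cdot f = y\}|}{|F|}, \]
the pushforward of the uniform probability measure on $F$ under the orbit map $f \mapsto x \cdot f$. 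Properties (1) and (2) of Definition \ref{defA} are then immediate with constant $R$: each $\xi_x$ is a probability vector whose support sits inside the ball of radius $R$ about $x$.

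For property (3), take an edge $(x, x \cdot s)$ labelled by a generator $s$. The substitution $f \mapsto sf$ rewrites $\xi_{x \cdot s}$ as $\xi_{x \cdot s}(y) = |sF \cap E_{x,y}|/|F|$, where $E_{x,y} := \{f \in \Gamma : x \cdot f = y\}$. Since the sets $E_{x,y}$ partition $\Gamma$ as $y$ runs over the cosets, summing the pointwise differences termwise delivers the key estimate
\[ \|\xi_x - \xi_{x \cdot s}\|_1 \le \frac{|F \triangle sF|}{|F|} < \epsilon. \]

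There is no genuine obstacle. The one point worth emphasising is uniformity: the set $F$ and the radius $R$ depend only on $\Gamma$, $S$ and $\epsilon$, not on the subgroup indexing a particular Schreier graph, so the same family $\xi$ and the same constant $R$ witness property A simultaneously across every graph in the sequence. Because each $\xi_x$ is automatically supported in the component containing $x$, the definition is compatible with the disjoint-union structure on $\mathcal{G}$.
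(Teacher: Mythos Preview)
Your proof is correct and is essentially the same as the paper's. The paper writes the witnessing function as $\overline{\xi}_{\overline{x}}(\overline{y}) = |\Lambda y \cap xF|/|F|$ for a chosen representative $x$ of $\overline{x}$, while you write it as the pushforward $\xi_x(y) = |\{f\in F : x\cdot f = y\}|/|F|$; these are literally the same function, and the key estimate $\|\xi_x - \xi_{x\cdot s}\|_1 \le |F\triangle sF|/|F|$ is obtained in both cases by observing that the cosets partition $\Gamma$.
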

\begin{proof}
Fix $\epsilon> 0$. Take $\Lambda\le \Gamma$ and a generating set for $\Gamma$, this defines a Schreier graph on the right cosets of $\Lambda$. Since $\Gamma$ is amenable there exists a F\o lner set $F\subset \Gamma$ such that $|\partial F|<\epsilon|F| $, where $\partial F$ denotes the boundary of $F$. Now we define the following map:
\[ \xi: \Gamma\rightarrow \ell_1(\Gamma): x \mapsto \left( g\mapsto  \left\{
    \begin{array}{ll}
        \frac{1}{|F|} & \mbox{if } g\in xF \\
        0 & \mbox{otherwise}
    \end{array}
\right. \right)   ,\]
i.e. $\xi_x$ is the normalised characteristic function of $xF$. Clearly $\lVert\xi_x\rVert_1 = 1$ and of finite support by the finiteness of $F$. Furthermore $\lVert\xi_x-\xi_y\rVert<\epsilon$ if $(x,y)$ is an edge in $Cay(\Gamma)$.

Now we will show that we can find $\overline{\xi}: \Gamma/\Lambda\rightarrow \ell_1(\Gamma/\Lambda)$ with support smaller or equal than the support of $\xi$. More precisely it has the properties as in \ref{defA} for our previously chosen $\epsilon$.
We define $\overline{\xi}$ by
\[ \overline{x}\rightarrow \left( \overline{y}\mapsto  \sum_{z\in \overline{y}\cap xF  }  \frac{1}{|F|}   \right)  . \]
Here $z$ is obviously in $\Gamma$. We check this map is well-defined (i.e. that the image of $\overline{x}$ does not depend on the chosen representative). Suppose $\overline{x} = \overline{x'}$. Hence there exists an element $h\in \Lambda$ such that $hx = x'$. One can calculate the following: $\lvert \overline{y}\cap x'F \rvert = \lvert h\overline{y}\cap hxF \rvert = \lvert h(\overline{y}\cap xF)\rvert = \lvert\overline{y}\cap xF\rvert$. So clearly $\overline{\xi}_{\overline{x}} = \overline{\xi}_{\overline{x'}}$. Also observe that $\lVert \overline{\xi}_{\overline{x}}\rVert=1$.

Let us take a look at $\lVert \overline{\xi}_{\overline{x}} - \overline{\xi}_{\overline{y}}\rVert$, where $\overline{x} = \overline{y}g$ with $g$ some generator of $\Gamma$. Note, that here we are looking at points that are linked by an edge in the Schreier graph of $\Gamma/\Lambda$.
There exist representatives $x$ of $\overline{x}$  and $y$ of $\overline{y}$ such that $x=yg$ in $\Gamma$. Hence
\begin{align}
 \lVert \overline{\xi}_{\overline{x}} - \overline{\xi}_{\overline{y}}\rVert
&= \frac{1}{|F|} \sum_{\Lambda z\in \Gamma/\Lambda} \lvert \, \lvert \Lambda z\cap xF\rvert - \lvert \Lambda z\cap yF\rvert \, \rvert\nonumber \\
&\le \frac{1}{|F|} \sum_{\Lambda z\in \Gamma/\Lambda} |(xF\Delta yF)\cap \Lambda z|\nonumber \\
&= \frac{| xF\Delta yF|}{|F|}\label{eqsymm} \\
&\le \epsilon.\label{ineqeps}
\end{align}

Here equality (\ref{eqsymm}) follows from the $\Lambda z$ forming a partition of $\Gamma$ and inequality (\ref{ineqeps}) follows from the choice of $x$ and $y$ (i.e. being connected by an edge in the Cayley graph of $\Gamma$).

This proves the lemma.
\end{proof}

We show that hyperfiniteness of a sofic approximation implies amenability of the group.
\begin{theorem}\label{sofapphypam}
Take a finitely generated group $\Gamma$ and a sofic approximation $\mathcal{G} = \{G_n\}_{n\in\mathbb{N}}$. If $\G$ is hyperfinite, then $\Gamma$ is amenable.
\end{theorem}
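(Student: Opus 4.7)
The plan is to build a F\o{}lner sequence for $\Gamma$ by harvesting the small components furnished by hyperfiniteness and transferring them into $Cay(\Gamma)$ through the local isomorphisms supplied by the sofic approximation. Concretely, the aim is to produce, for every $\delta>0$, a finite subset $F\subset \Gamma$ with edge boundary at most $\delta\lvert F\rvert$.

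Fix $\epsilon>0$. First I would invoke hyperfiniteness to obtain a constant $K$ and, along a subsequence, edge sets $E_n^\epsilon\subseteq E(G_n)$ with $\lvert E_n^\epsilon\rvert/\lvert V(G_n)\rvert<\epsilon$ such that every connected component of $G_n\setminus E_n^\epsilon$ has at most $K$ vertices. Write $\mathcal{C}_n$ for the set of components and, for $C\in\mathcal{C}_n$, let $\partial C$ be the edges of $E_n^\epsilon$ incident to $C$, so that $\sum_{C\in\mathcal{C}_n}\lvert\partial C\rvert\le 2\lvert E_n^\epsilon\rvert\le 2\epsilon\lvert V(G_n)\rvert$. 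A Markov-type averaging then shows that the vertices lying in \emph{bad} components, defined by $\lvert\partial C\rvert>\sqrt{\epsilon}\,\lvert C\rvert$, account for at most a $2\sqrt{\epsilon}$-fraction of $V(G_n)$.

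Next I would exploit the sofic hypothesis by fixing the radius $r:=K$. Because each component has at most $K$ vertices and is connected, it has diameter at most $K-1$, so for any $v\in C$ the ball $B_r(G_n,v)$ contains $C$ together with every edge of $G_n$ incident to $C$. Benjamini--Schramm convergence guarantees, for $n$ large enough, that all but an $\epsilon$-fraction of vertices $v\in V(G_n)$ admit a labelled isomorphism $\phi_{n,v}\colon B_r(G_n,v)\to B_r(Cay(\Gamma),e)$. A union bound with the bad-component estimate then lets me, for large $n$, select a vertex $v_n$ whose component $C_n$ is good and whose $r$-ball maps onto the Cayley ball via some labelled isomorphism $\phi_n$.

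Setting $F_n:=\phi_n(C_n)\subset\Gamma$ produces a finite subset of $Cay(\Gamma)$ whose edge boundary equals $\lvert\partial C_n\rvert\le\sqrt{\epsilon}\,\lvert F_n\rvert$, because $\phi_n$ is a labelled isomorphism on a ball containing $C_n$ and all edges incident to $C_n$ in $G_n$. Letting $\epsilon\to 0$ through a sequence and collecting the corresponding $F_n$ yields a F\o{}lner sequence in $\Gamma$, from which amenability follows. The hard part is the coupled counting: one must secure a good component \emph{and} a labelled local isomorphism at the same vertex, which succeeds because both sets of vertices have density close to $1$ as $\epsilon\to 0$ and $n\to\infty$, provided the radius $r$ is uniform in $n$; the uniformity of $K$ from hyperfiniteness is precisely what makes this possible, and once the two events are intersected the transfer to $Cay(\Gamma)$ is automatic.
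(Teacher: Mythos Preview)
Your proof is correct and follows essentially the same strategy as the paper: use hyperfiniteness to get uniformly bounded components, use an averaging argument to locate a component with small edge boundary, and use Benjamini--Schramm convergence at a radius governed by the component-size bound to lift that component into $Cay(\Gamma)$ as a F\o lner set. The only differences are cosmetic: the paper takes radius $2R$ and argues that one good $2R$-ball forces every vertex of the component to have a good $R$-ball (its ``particularly nice'' sets), whereas you more directly take radius $r=K$ around a single vertex to swallow the whole component and its boundary; and the paper uses a straightforward pigeonhole with constants $\epsilon/4$ and $1/2$ where you use a Markov inequality with threshold $\sqrt{\epsilon}$.
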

\begin{proof}
We show that hyperfiniteness of a sofic approximation implies amenability for $\Gamma$. Now since $\mathcal{G}$ is hyperfinite there exists an $R>0$ and for every $n$ a partition $\{A_i^n\}_{i=1\dots k_n}$, where $|A_i^n|<R$. Moreover if $E_n^\epsilon$ is the collection of edges between partitioning sets in $G_n$, then we have that $\liminf_{n\rightarrow \infty} \frac{E_n^{\epsilon}}{V_n} < \epsilon$. From here on we take such a partition with respect to $\epsilon/4$.
Now since $\mathcal{G}$ is a sofic approximation (and thus the graph sequence is equivalently Benjamini-Schramm convergent) the number of vertices in $G_n$ for which the $2R$-ball centered at this vertex is different from the $2R$-ball in the Cayley-graph of $\Gamma$ will proportionally go to zero. Take $V_n^{2R}$ the vertices in $G_n$ that have a non-isomorphic $2R$-ball. So we can now choose $N>0$  such that $\frac{|V_N^{2R}|}{|V_N|} < 1/2$. 

We say a vertex is nice if the ball $B(x,R)$ is isomorphic to the ball $B(e,R)$ in the Cayley graph of $\Gamma$. We say a partitioning set $A_i^N$ is particularly nice if all its vertices are nice. This is interesting because then the set can be lifted to the Cayley graph of $\Gamma$. Note that if $A_i^N$ contains one point of $V_N\backslash V_N^{2R}$, then all its points are nice. Hence the union of all particularly nice partitioning sets contains at least $\lvert V_N\backslash V_N^{2R}\rvert$ vertices. We can calculate the following.
\begin{equation*}
\begin{aligned}
\frac{|E_N^{\epsilon/4}|}{|V_N\backslash V_N^{2R}|}
\le \frac{|E_N^{\epsilon/4}|}{|V_N| (1-1/2)}
< \frac{\epsilon}{4} \cdot 2
= \frac{\epsilon}{2}.
\end{aligned}
\end{equation*}
Hence there exists some particularly nice $A_I^N$ that has boundary smaller than $\epsilon$ times its number of points. Moreover this set is situated in the part of the graph for which the $R$-balls are isomorphic to the $R$-balls of the Cayley graph of $\Gamma$ and thus this set can be lifted to $\Gamma$, defining a F\o lner set with respect to $\epsilon$.
If we do this for all $\epsilon$, we obtain a F\o lner sequence for $\Gamma$. This concludes the proof.
\end{proof}

Now the hard work is done and our main theorem becomes an immediate corollary of our previous theorems.

\begin{theorem}\label{elekgeneral}
	Consider a finitely generated group $\Gamma$ and a Farber sequence $(N_k)_{k\in \mathbb{N}}$. If $\mathcal{G}$ is the associated graph sequence of Schreier graphs then the following are equivalent:
	\begin{enumerate}
		\item $\Gamma$ is amenable
		\item $\mathcal{G}$ has property A
		\item $\mathcal{G}$ is hyperfinite.
	\end{enumerate}
\end{theorem}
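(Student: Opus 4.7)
The plan is to close the cycle of implications $(1)\Rightarrow (2)\Rightarrow (3)\Rightarrow (1)$, and essentially every piece is already in place earlier in the excerpt, so this theorem should come out as an assembly of prior results rather than a fresh argument.

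First I would handle $(1)\Rightarrow (2)$: this is Lemma \ref{lemmaamimpA} verbatim. That lemma makes no assumption whatsoever about the subgroups being Farber (or even having trivial intersection); it applies to any graph sequence of Schreier graphs of an amenable group $\Gamma$, so we get property A of $\G$ for free.

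Next, for $(2)\Rightarrow (3)$, I would simply invoke Theorem \ref{Aimplieshyperfinite}, which says that any graph sequence with property A is hyperfinite. Again, nothing particular about Farber sequences is needed.

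The only implication with real content is $(3)\Rightarrow (1)$, and here the key observation is that a Farber sequence was defined precisely so that the associated sequence of Schreier graphs is Benjamini-Schramm convergent to $\mathrm{Cay}(\Gamma)$, i.e.\ it is a sofic approximation of $\Gamma$. Consequently Theorem \ref{sofapphypam}, which states that hyperfiniteness of a sofic approximation forces amenability of $\Gamma$, applies directly and yields $(1)$. The main (and only) ``obstacle'' is recognising that a Farber sequence \emph{is} by definition a sofic approximation, after which the implication is just a citation. Hence the theorem follows as an immediate corollary of Lemma \ref{lemmaamimpA}, Theorem \ref{Aimplieshyperfinite}, and Theorem \ref{sofapphypam}.
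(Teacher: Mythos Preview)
Your proposal is correct and matches the paper's own proof exactly: the paper also closes the cycle $(1)\Rightarrow(2)\Rightarrow(3)\Rightarrow(1)$ by invoking Lemma \ref{lemmaamimpA}, Theorem \ref{Aimplieshyperfinite}, and Theorem \ref{sofapphypam}, respectively. Your remark that the only substantive point is recognising a Farber sequence as a sofic approximation is precisely the observation underlying the paper's one-line proof.
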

\begin{proof} 
We prove $1 \Rightarrow 2$, $2\Rightarrow 3$, $3\Rightarrow 1$. The three implications are particular cases of respectively \ref{lemmaamimpA}, \ref{Aimplieshyperfinite} and \ref{sofapphypam}.
\end{proof}

Our results are optimal in the following sense. Alekseev and Finn-Sell show in \cite{alekseev2016sofic} that if a sofic approximation has property A, then the group must be amenable. They ask whether the converse holds. The answer is negative. For an easy counterexample we introduce the notion of expanders. 

\begin{definition}
For a finite graph $G$, the Cheeger constant is defined as 
\[h(G) := \min \left\{ \frac{\lvert \partial A\rvert}{\lvert A \rvert} \; \lvert \; A\subset V(G),\; 0< \lvert A\rvert \le \frac{1}{2}\lvert V(G) \rvert  \right\} .\]
A graph sequence $\G=\{G_n\}_{n\in\mathbb{N}}$ is an expander if there is some $c>0$ such that for every $n\in\mathbb{N}$ the Cheeger constant $h(G_n)> c$.
\end{definition}

It is a well-known fact that expanders do not have property A (see for example \cite{MR2562146}). To answer our question, consider the following scenario. Take an amenable residually finite group $\Gamma$ and a box space $\square_{N_k} \Gamma$ of this group with respect to a generating set $S$. This Benjamini-Schramm converges to the Cayley graph of $\Gamma$. Now take an expander sequence $\{H_n\}_{n\in\mathbb{N}}$ and fix a labeling in $S$ on each $H_n$. For each $H_n$, there exists a graph $Cay(\Gamma/N_k,\overline{S})$ in the box space of $\square_{N_k} \Gamma$ such that $\frac{\lvert V(H_n)\rvert}{\lvert V(\Gamma/N_k)\rvert}< \frac{1}{n}$. Now we choose one point of $H_n$ and one point of $Cay(\Gamma/N_k,\overline{S})$ and identify them, obtaining the graph $G_n'$. Now we construct the following graph sequence $\G = \{ G_n'\}$. One sees that the expander sequence that we added to the box space is negligible with respect to the box space. As such this still Benjamini-Schramm converges and still forms a sofic approximation for $\Gamma$. However this no longer has property A since it contains an expander. Hence there is no hope to show Theorem \ref{elekgeneral} for arbitrary sofic approximations. 

Another question asked is whether hyperfiniteness implies property A for sofic approximations. The previous example also gives a negative answer in this case. So also here we have found the limits of the implication. In Theorem \ref{sofapphypam} we showed that hyperfiniteness of the sofic approximation implies amenability of the group. The other implication also holds. This is due to Elek in \cite{elekcombcost}. The original theorem only mentions that this is true for box spaces, however the exact same proof can be used to obtain the same result for sofic approximations.\newline

We now introduce a new concept property almost-A in order to `fix' the theorem. 

\begin{definition}\label{almosta}
Let $\G$ be a graph sequence, then $\G$ has property almost-A if for all $n\in\mathbb{N}$, there exists sets $V_n' \subset V(G_n)$ such that:
\begin{enumerate}
	\item $\lim\limits_{n\rightarrow\infty} \frac{\lvert V_n'\rvert}{\lvert V(G_n)\rvert} = 0$;
	\item $ \mathcal{G}' := \{ G_n\backslash V_n' \}_{n\in\mathbb{N}}$ has property A.
\end{enumerate}
\end{definition}
The following theorem saves the equivalences for sofic approximations and in doing so, it gives an equivalent definition for amenability.
\begin{theorem}
Consider a group $\Gamma$ and a sofic approximation $\G$ of that group, then the following are equivalent:
\begin{enumerate}
	\item $\Gamma$ is amenable;
	\item $\mathcal{G}$ has property almost-A;
	\item $\G$ is hyperfinite.
\end{enumerate}
\end{theorem}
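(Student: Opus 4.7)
The plan is to establish the cyclic chain of implications $(1)\Rightarrow(2)\Rightarrow(3)\Rightarrow(1)$. The implication $(3)\Rightarrow(1)$ is already Theorem~\ref{sofapphypam}, so only the first two implications require new work.

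For $(2)\Rightarrow(3)$, the strategy is to lift hyperfiniteness from $\G'=\{G_n\setminus V_n'\}$ back to $\G$ using the bounded degree. Since $\G'$ has property A by hypothesis, Theorem~\ref{Aimplieshyperfinite} gives that $\G'$ is hyperfinite. Given $\epsilon>0$, take a hyperfinite decomposition of $\G'$ with removable edges $E_n^{\epsilon/2}\subset E(G_n\setminus V_n')$ and uniform component-size bound $K$. In $G_n$ itself, remove $E_n^{\epsilon/2}$ together with every edge incident to a vertex of $V_n'$; by bounded degree $d$ this adds at most $d|V_n'|$ edges. The resulting components of $G_n$ are either components inherited from $\G'\setminus E_n^{\epsilon/2}$ (size $\le K$) or isolated vertices of $V_n'$, and the removed edge fraction is bounded by
\[ \frac{|E_n^{\epsilon/2}|}{|V(G_n)|}+\frac{d|V_n'|}{|V(G_n)|}, \]
which falls below $\epsilon$ for large $n$ since the first term is asymptotically less than $\epsilon/2$ and the second tends to $0$. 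Hence $\G$ is hyperfinite.

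For $(1)\Rightarrow(2)$, the approach is to let $V_n'$ absorb the part of $G_n$ that fails to look locally like the Cayley graph of $\Gamma$, and then mimic the construction in Lemma~\ref{lemmaamimpA} on the remaining vertices. By the Benjamini-Schramm convergence inherent in the definition of sofic approximation, one picks a slowly growing sequence $r_n\to\infty$ so that the set $V_n'$ of vertices $x\in V(G_n)$ whose rooted labelled $r_n$-ball is not isomorphic to $B(e,r_n)\subset Cay(\Gamma)$ satisfies $|V_n'|/|V(G_n)|\to 0$. To verify property A of $\G\setminus V_n'$, fix $\epsilon>0$ and use amenability to produce a F\o lner set $F\subset B(e,S)$ with $|\partial F|/|F|<\epsilon$. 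For $n$ large with $r_n\gg S$, every $x\notin V_n'$ admits an isomorphism $B(x,r_n)\cong B(e,r_n)$, along which $F$ pulls back to $F_x\subset B(x,S)$; define $\xi_x$ as a normalization of the characteristic function of $F_x$ restricted to $V(G_n)\setminus V_n'$. The F\o lner computation of Lemma~\ref{lemmaamimpA} then delivers the required bounds $\|\xi_x\|_1=1$ and $\|\xi_x-\xi_y\|_1<\epsilon$ for edges $(x,y)\in E(G_n\setminus V_n')$.

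The main obstacle lies in this last step: the lifted set $F_x$ may intersect $V_n'$, so restricting the characteristic function of $F_x$ to $V(G_n)\setminus V_n'$ can produce a function of norm less than $1$, and naive renormalization risks inflating $\|\xi_x-\xi_y\|_1$. The cleanest way to control this is to observe that for any edge $(x,y)$ of $G_n\setminus V_n'$ one has $|F_x\triangle F_y|\le|\partial F|<\epsilon|F|$, hence $\bigl||F_x\cap V_n'|-|F_y\cap V_n'|\bigr|<\epsilon|F|$, so that after renormalization the $\ell^1$-distance between $\xi_x$ and $\xi_y$ picks up only an $O(\epsilon)$ correction. One may also need to adjust the choice of $V_n'$ (for instance by adjoining a neighborhood of the bad vertices, chosen to grow with $r_n$) to ensure that $|F_x\cap V_n'|/|F|$ stays uniformly small; once this is achieved, the Lemma~\ref{lemmaamimpA}-style F\o lner estimate carries over and establishes property A of $\G\setminus V_n'$.
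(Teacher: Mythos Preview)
Your cycle $(1)\Rightarrow(2)\Rightarrow(3)\Rightarrow(1)$ differs from the paper, which argues $(2)\Rightarrow(1)$ and $(1)\Rightarrow(2)$ directly and relies on the already established equivalence $(1)\Leftrightarrow(3)$ (Elek's direction together with Theorem~\ref{sofapphypam}). Your step $(2)\Rightarrow(3)$ is a genuinely different and perfectly valid shortcut: instead of invoking Alekseev--Finn-Sell to pass through amenability, you apply Theorem~\ref{Aimplieshyperfinite} to $\G'$ and then lift the hyperfinite decomposition back to $\G$ by additionally severing the at most $d|V_n'|$ edges incident to $V_n'$. This is more elementary than the paper's route since it avoids the groupoid machinery behind Alekseev--Finn-Sell.

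For $(1)\Rightarrow(2)$ your outline matches the paper's up to the point you flag as ``the main obstacle'', but your proposed resolution does not close the gap. Renormalising $\chi_{F_x\setminus V_n'}$ only yields controlled $\ell^1$-differences if $|F_x\cap V_n'|/|F|$ is bounded away from $1$, and your suggestion to enforce this by enlarging $V_n'$ with a neighbourhood of the bad set does not help: after enlarging, a vertex $x$ just outside the new $V_n'$ will still have $F_x\subset B(x,S)$ meeting the \emph{enlarged} $V_n'$, for exactly the same reason as before. Concretely, if $x\notin V_n'$ guarantees only that $B(x,r_n)$ is Cayley-like, then for $y\in F_x$ one obtains merely that $B(y,r_n-S)$ is Cayley-like, so $y$ can lie in $V_n'=B_n(r_n)$; and the portion $B_n(r_n)\setminus B_n(r_n-S)$ intersected with $B(x,S)$ can occupy a large fraction of $F_x$. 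No choice of growing neighbourhood eliminates this boundary effect.

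The paper sidesteps the problem entirely: it first defines $\xi:\G'\to\ell^1(\G)$ (target the full graph, so $\|\xi_x\|_1=1$ holds trivially), and then invokes the mass-redistribution procedure from the proof of Theorem~\ref{Aimplieshyperfinite} to push the values $\xi_x(a)$ for $a\in V_n'$ onto nearby surviving vertices, producing $\xi':\G'\to\ell^1(\G')$ without ever renormalising. That redistribution preserves both $\|\cdot\|_1$ and the edge estimate $\|\xi_x'-\xi_y'\|_1\le\|\xi_x-\xi_y\|_1$. Replacing your renormalisation paragraph with this mass-pushing step would complete your argument.
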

\begin{proof}
Firstly if $\mathcal{G}$ has property almost-A, then the graph sequence $\G'$ in Definition \ref{almosta} is still a sofic approximation of the group $\Gamma$. Hence by the theorem of Alekseev and Finn-Sell (see \cite{alekseev2016sofic}), $\Gamma$ is amenable.\newline
On the other hand for a sofic approximation $\G$ of an amenable group $\Gamma$ we define $\phi: \mathbb{N}\rightarrow\mathbb{N}$. We take for $\phi(s)$ the smallest natural number such that for all $m\ge \phi(s)$ the probability 
$$p_s(G_m,B_s(Cay(\Gamma),e)) \ge 1-\frac{1}{s}.$$
This function is well-defined by Benjamini-Schramm convergence of $\G$ and is increasing. We define the following for $\phi(s)\le n< \phi(s+1)$
$$V_n' = \{ x\in V(G_n) \; \lvert\;  B_{s}(G_n,x) \nsimeq B_s(Cay(\Gamma),e) \} .$$ 
We show $\G$ has property almost-A with respect to the sets $V_n'$ and thus the induced graph sequence $\mathcal{G}' := \{ G_n\backslash V_n' \}_{n\in\mathbb{N}}$ must have property A. Fix $\epsilon>0$. Since $\Gamma$ is amenable we have a F\o lner set $F$ such that $\lvert \partial F\rvert < \epsilon \lvert F\rvert$. Take $S = \lvert F\rvert$. We define $\xi: \G' \rightarrow  \ell^1(\mathcal{G})$ for $x \in G_n'$ where $n\ge \phi(S)$ by 

$$ x \mapsto \left( y\mapsto  \left\{
\begin{array}{ll}
\frac{1}{S} & \mbox{if } y\in xF \\
0 & \mbox{otherwise}
\end{array}
\right. \right) ,  $$
and otherwise for $x\in G_n'$ and $n< \phi(S)$
$$ x \mapsto \left( y\mapsto  \left\{
\begin{array}{ll}
\frac{1}{\lvert V(G_n')\rvert} & \mbox{if } y\in G_n' \\
0 & \mbox{otherwise}
\end{array}
\right. \right) .   $$
Note that the expression $xF$ is well-defined by the isomorphism $B_{S}(G_n,x) \simeq B_S(\Gamma,e)$. We can extract from $\xi$ some $\xi': \G' \rightarrow \ell^1(\mathcal{G}')$ that has the necessary properties for property A by applying the method we used in the proof of Theorem \ref{Aimplieshyperfinite}.
\end{proof}

\section*{Acknowledgements}
We wish to thank L\'aszl\'o M\'arton T\'oth for pointing out that Theorem \ref{intermediatemultcost} could be generalised to Farber sequences. We are also immensely grateful to Ana Khukhro, and Alain Valette for their comments on an earlier version of the manuscript.

\bibliographystyle{plain}
\bibliography{cost}

\end{document}